\documentclass[12pt]{amsart}

\usepackage{amsmath}
\usepackage{enumerate}
\usepackage{xspace}
\newtheorem{theorem}{Theorem}[section]
\newtheorem{lemma}[theorem]{Lemma}
\newtheorem{corollary}[theorem]{Corollary}

\theoremstyle{definition}
\newtheorem{definition}[theorem]{Definition}
\newtheorem{convention}[theorem]{Convention}
\newtheorem{notation}[theorem]{Notation}

\newtheorem{remark}[theorem]{Remark}
\newtheorem{remarks}[theorem]{Remarks}

\makeatletter
\@namedef{subjclassname@2010}{%
  \textup{2010} Mathematics Subject Classification}
\makeatother

%Underline
\def\uI{{\underline I}}
\def\uJ{{\underline J}}
\def\ux{{\underline x}}
\def\uz{{\underline z}}

%Hats
\def\hf{{\hat{f}}}
\def\hI{{\hat{I}}}
\def\hX{{\hat{X}}}
\def\hmu{{\hat{\mu}}}
\def\hnu{{\hat{\nu}}}

%Blackboard bold
\def\R{{\mathbb R}}
\def\N{{\mathbb N}}

%Math operators
\DeclareMathOperator{\Cl}{Cl}
\DeclareMathOperator{\Intt}{Int}
\DeclareMathOperator{\orb}{orb}

%Other
\newcommand{\thr}[1]{\left\langle #1 \right\rangle}
\newcommand{\thrn}[1]{\thr{#1_0, #1_1, #1_2, \dots}}
\newcommand{\cyl}[1]{\left[#1\right]}
\newcommand{\cyls}[1]{[#1]}
\newcommand{\cyln}[1]{\cyl{#1_0, #1_1, \dots, #1_n}}
\newcommand{\up}[1]{^{(#1)}}

\newcommand{\simple}{flat\xspace}
\newcommand{\Simple}{Flat\xspace}

\def\GR{{\mathcal{GR}}}

\def\raw{\rightarrow}

\def\ilim{{\varprojlim}}

\numberwithin{equation}{section}

%%%%%%%%%%% For IMPAN journals:

\frenchspacing

\textwidth=13.5cm
\textheight=23cm
\parindent=16pt
\oddsidemargin=1.2cm
\evensidemargin=1.2cm
\topmargin=-0.5cm

%%%%%%%%%%%%%%%%%%%%%%%%%%%%%%%%%%%

\begin{document}

%%%%% To ease editing, for IMPAN journals add:

\baselineskip=17pt

%%%%%%%%%%%%%%%%

\title[Typical path components]{Typical path components in tent
	map inverse limits}
\date{}

\author[P. Boyland]{Philip Boyland}
\address{Department of
    Mathematics\\University of Florida\\372 Little Hall\\Gainesville\\
    FL 32611-8105, USA}
\email{boyland@ufl.edu}
\author[A. de Carvalho]{Andr\'e de Carvalho}
\address{Departamento de
    Matem\'atica Aplicada\\ IME-USP\\ Rua do Mat\~ao 1010\\ Cidade
    Universit\'aria\\ 05508-090 S\~ao Paulo SP\\ Brazil}
\email{andre@ime.usp.br}
\author[T.Hall]{Toby Hall}
\address{Department of Mathematical Sciences\\ University of
    Liverpool\\ Liverpool L69 7ZL, UK}
\email{tobyhall@liv.ac.uk}

\subjclass[2010]{Primary 37B45, %Continua theory in dynamics
Secondary 37E05. %Maps of the interval
}

\keywords{Unimodal maps, inverse limits, typical path components, invariant
measures}

\begin{abstract}
	In the inverse limit $\hI_s$ of a tent map $f_s$ restricted to its core,
	the set $\GR$ of points whose path components are bi-infinite and bi-dense
	has full measure with respect to the measure induced on $\hI_s$ by the
	unique absolutely continuous invariant measure of $f_s$. With respect to
	topology, there is a dichotomy. When the parameter~$s$ is such that the
	critical orbit of~$f_s$ is not dense, $\GR$ contains a dense $G_\delta$
	set. In contrast, when the critical orbit of~$f_s$ is dense, the complement
	of $\GR$ contains a dense $G_\delta$ set.
\end{abstract}

\maketitle

\section{Introduction}

The inverse limits $\hI_s$ of tent maps $f_s$ restricted to their core
intervals have been the subject of intense investigation in dynamics and
topology. In dynamics they are models for
attractors~\cite{mis,bruin,bargemartin,ingrambook}, while in topology they are
studied for their intrinsic topological complexity. The main recent focus of
topological investigations has been the proof of the Ingram conjecture: for
different values of $s\in [\sqrt{2}, 2]$ the inverse limits are not
homeomorphic (for references see~\cite{Ingram}, which contains the final proof
for tent maps which are not restricted to their cores). Many distinguishing
properties have been discovered. For example, when the parameter $s$ is such
that critical orbit of $f_s$ is dense (a full measure, dense $G_\delta$ set of
parameters), theorems of Bruin and of Raines imply that the inverse limit
$\hI_s$ is nowhere locally the product of a Cantor set and an
interval~\cite{bruin,raines}. Perhaps more striking, for a dense, $G_\delta$
set of parameters~$s$, Barge, Brooks and Diamond~\cite{BBD} show that the
inverse limit has a strong self-similarity: every open subset of~$\hI_s$
contains a homeomorphic copy of $\hI_t$ for every $t\in[\sqrt{2}, 2]$.

In this paper we take an alternative point of view and study the abundance of
 tame behavior. A point $\ux\in\hI_s$ is called \emph{globally leaf regular} if
 its path component is intrinsically homeomorphic to $\R$ (that is, if it is a
 continuous injective image of~$\R$), and this path component is dense and
 metrically infinite in both directions. For certain values of $s$ the
 set~$\GR$ of globally regular points is well understood. For example, if the
 critical point is $n$-periodic then $\GR$ consists of the entire inverse limit
 except for $n$ components which are intrinsically homeomorphic to $[0,
 \infty)$~\cite{BBD, B2}. Here we show that, for all $s$, the globally leaf regular
 points are typical in the inverse limit with respect to a natural measure. In
 contrast, globally leaf regular points are topologically typical only for
 those $s$ for which the critical orbit of $f_s$ is not dense.

\begin{theorem}\label{main}
	Let $s\in(\sqrt{2}, 2]$, and let $\hI_s$ be the inverse limit of the tent
	map $f_s$ restricted to its core~$I_s$.
	\begin{enumerate}[(a)]
        \item For all $s$, the set of globally leaf regular points has full
        measure with respect to the measure induced on $\hI_s$ by the unique
        absolutely continuous invariant measure for $f_s$.

		\item If the critical orbit of $f_s$ is not dense in $I_s$, then the
	 	set of globally leaf regular points of $\hI_s$ contains a dense
	 	$G_\delta$ set.

		\item If the critical orbit of $f_s$ is dense in $I_s$, then the
	 	complement of the set of globally leaf regular points of $\hI_s$
	 	contains a dense $G_\delta$ set. Indeed, there is a dense $G_\delta$
	 	set of points whose path components are either points or are locally
	 	homeomorphic to~$[0,1)$, and whose $\hf_s$-orbits are dense in~$\hI_s$.
	\end{enumerate}
\end{theorem}

Parts~(b) and~(c) of this result are built upon the foundation of well-known
topological properties of tent map inverse limits and, while new, are expected
from the existing literature (see, in particular, \cite{bruin}). Part~(a), on
the other hand, requires the introduction of new measure theoretic ideas: in
particular, a measure $\alpha_x$ defined on almost every fiber $\pi_0^{-1}(x)$,
where $\pi_0\colon\hI_s\to I_s$ is the projection. This measure is defined by
an explicit formula in Definition~\ref{alpha_x}: its crucial property, given by
Theorem~\ref{holo}, is its holonomy invariance on appropriate collections of
arcs in~$\hI_s$ which connect two fibers.  We also show (Theorem~\ref{disthm})
that the collection of measures $\{\alpha_x\}$ is a prescribed scalar multiple
of the disintegration onto the fibers of the measure induced on $\hI_s$ by the
unique absolutely continuous invariant measure for~$f_s$. These properties,
coupled with the topological structure of~$\hI_s$ and the ergodicity of the
natural extension, yield part~(a) of the theorem.

An additional motivation for this work is the discovery in~\cite{prime} that
the natural extensions of tent maps are semi-conjugate to sphere
homeomorphisms, by semi-conjugacies for which all fibers except perhaps one are
finite. For some parameters~$s$ these sphere homeomorphisms are pseudo-Anosov
maps, and for some they are generalized pseudo-Anosovs (as defined
in~\cite{gpA}). Our results here show that for the other parameters there is
still an analog of an invariant unstable foliation, which carries a holonomy
invariant transverse measure. It is important to note that these foliations
have to be understood in a suitable sense. There are always bi-infinite leaves
but there need not be even measurable foliations: we will show in a subsequent
paper that when the critical orbit of $f_s$ is dense there is no measurably
foliated chart in the neighborhood of any point.

In \cite{su}, Su shows that in the inverse limit of a rational map of the
Riemann sphere the typical path component (with respect to a natural measure)
has the affine structure of the complex plane. The measure theoretic results
presented here can be seen as analogs of this result, as we show that, with
respect to a natural measure, the typical path component is intrinsically
isometric to the real line. The results here were in part inspired by Su's
paper and we borrowed several ideas from it, most prominently the use of boxes,
and the main ideas in the proofs of our Lemma~\ref{pccomp} and
Theorem~\ref{main}(a). Note also that Lyubich and Minksy give a deep study of
the inverse limits of rational maps from a somewhat different point of view
in~\cite{lyumin}.

\medskip \medskip

\section{Basic topology and notation}
We consider a fixed tent map $f_s:I\raw I$ with slope $s\in(\sqrt{2}, 2]$ and
critical point $c$, restricted to its core $I_s = [f_s^2(c), f_s(c)]$. Since we
consider a fixed map, we suppress the dependence on $s$ and rescale so that the
core is $[0,1]$. As a result, although there are no subscripts~$s$ in the
remainder of the paper, $f\colon I\to I$ will always denote a core tent map of
slope~$s$ defined on~$I=[0,1]$ (so that $f(c)=1$ and $f^2(c)=0$), and $\hI :=
\ilim(f, I)$ will always denote the inverse limit of this core tent map. 

Points (also called \emph{threads}) in $\hI$ are denoted $\ux = \thrn{x}$, with
$f(x_{i+1}) = x_i$ for all $i\geq 0$. The standard metric on~$\hI$ is given by
\[
	d(\ux, \ux') = \sum_{i=0}^\infty \frac{|x_i-x_i'|}{2^i}\, .
\]
The \emph{projections} are the maps $\pi_n:\hI\raw I$ given by $\pi_n(\ux) =
x_n$, for \mbox{$n\ge 0$}. The \emph{shift} or \emph{natural extension of $f$}
is the homeomorphism $\hf:\hI\raw\hI$ given by $\hf(\ux) = \thr{f(x_0), f(x_1),
f(x_2), \dots} =
\thr{f(x_0), x_0, x_1, \dots}$. Fundamental relations are $f\circ \pi_n
= \pi_n\circ \hf$ and $\pi_{m+n}\circ \hf^n = \pi_m$. If~$K\subseteq \hI$, we
write $K_n := \pi_n(K)$.

Until the end of Section~\ref{typ-topol}, where we complete the proofs of
parts~(b) and~(c) of Theorem~\ref{main}, we will assume that $s\in(\sqrt{2},
2)$, in order to avoid complicating some statements with exceptions. When $s=2$
it is straightforward to show that there is a dense $G_\delta$ set of globally
regular points.

\begin{convention}
	For brevity, we use the term \emph{interval} exclusively to mean a
	non-trivial subinterval of~$I$ (open, half-open, or closed); the term
	\emph{arc} to mean a subset of~$\hI$ which is intrinsically homeomorphic to
	such an interval (and so may be open, half-open, or closed); and the term
	\emph{continuum} to mean a subcontinuum of $\hI$ (which, as usual, must
	contain more than one point).
\end{convention}

The following fact about the dynamics of tent maps is well known.

\begin{lemma}\label{well-known}
	Let~$J$ be an interval. If $f^2(J)\not=I$, then $|f^2(J)|\ge s^2|J|/2$. In
	particular, if $N\ge -2\log(|J|)/\log(s^2/2)$ then $f^N(J)=I$.
\end{lemma}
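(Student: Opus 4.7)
The plan is to bound the length distortion of two steps of~$f$ by a case analysis, then iterate. The one-step estimate is: if $c\notin J$, then $f|_J$ is affine of slope $\pm s$, so $|f(J)| = s|J|$; if $c\in J$, write $J=[a,b]$ and use that $f(c)=1$ is the maximum of~$f$, so $f(J)=[\min(f(a),f(b)),1]$, whose length equals $s\cdot\max(c-a,b-c) \ge s|J|/2$, with equality precisely when $c$ is the midpoint of~$J$.

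For $f^2$, I would first observe that if $c\in J$ \emph{and} $c\in f(J)$, then $f(J)=[y,1]$ with $y\le c$, so $f(J)\supseteq[c,1]$; since $f$ is affine of slope $-s$ on $[c,1]$ with $f(c)=1$ and $f(1)=0$, we have $f([c,1]) = I$, and hence $f^2(J) = I$, contradicting the hypothesis. Therefore $c$ lies in at most one of $J$ and $f(J)$; since the single-step estimate gives a stretching factor of~$s$ when $c$ is absent from the interval and at least~$s/2$ when $c$ is present, each of the remaining cases yields $|f^2(J)| \ge s^2|J|/2$.

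For the quantitative conclusion, iterating the inequality gives $|f^{2k}(J)| \ge (s^2/2)^k|J|$ as long as all preceding $f^{2j}(J)$ differ from~$I$. Since $s^2/2 > 1$ for $s>\sqrt{2}$, the right-hand side exceeds~$1$ once $k > -\log|J|/\log(s^2/2)$, forcing $f^{2k}(J) = I$; because $f(I) = I$, once $f^M(J)=I$ the same holds for all larger~$M$, which yields the stated bound on~$N$. The only remotely delicate point, and the one I expect to be the main obstacle, is verifying that $c\in J$ together with $c\in f(J)$ forces $f^2(J)=I$; this rests on the identity $f([c,1])=I$, which is precisely what the restriction to the core makes true.
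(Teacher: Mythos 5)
Your proof is correct, and it follows the standard argument for this well-known fact (the paper itself gives no proof, citing it as folklore): the two-step expansion estimate via the four-way case analysis on whether $c$ lies in $J$ and/or in $f(J)$, with the case $c\in J$ and $c\in f(J)$ eliminated because $f(J)\supseteq[c,1]$ and $f([c,1])=I$ on the core. The only cosmetic point is that the final bookkeeping should use $k\ge T$ rather than $k>T$ (where $T=-\log|J|/\log(s^2/2)$), since $f^{2k}(J)\neq I$ already forces $|f^{2k}(J)|<1$ and hence $k<T$; and odd $N$ costs at most one extra application of $f$, which is harmless in every use the paper makes of the lemma.
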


The analysis of path components has been a central part of the study of tent
map inverse limits (see, for example, \cite{BM2,BB,B2}). The starting point is
this well-known basic characterization~\cite{ingrambook}.

\begin{theorem}\label{basics} 
    The inverse limit $\hI$ contains no subset homeomorphic to a circle or to
     the letter ``Y''. Therefore every path component of~$\hI$ is either a
     point or an arc.
\end{theorem}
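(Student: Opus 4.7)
The plan is to first establish that $\hI_s$ is a chainable (arc-like) continuum, then invoke classical structural results for such continua to rule out embedded circles and triods, and finally derive the path component dichotomy by imposing a linear order on each non-degenerate path component. For chainability, given $\varepsilon > 0$ I choose $N$ so large that $\sum_{i>N} 2^{-i} < \varepsilon/2$ (controlling the tail contribution to $d$), then cover $I=[0,1]$ by a chain of short overlapping intervals $K_1, \ldots, K_m$ of length $\delta$, with $\delta$ small enough that the Lipschitz bound $|x_i - x_i'| \le s^{N-i}\delta$ for $i \le N$ keeps the initial-segment contribution to $d(\ux, \ux')$ below $\varepsilon/2$ whenever $\ux, \ux' \in \pi_N^{-1}(K_j)$. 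The pullbacks $V_j := \pi_N^{-1}(K_j)$ then form an open chain cover of $\hI_s$ of mesh less than $\varepsilon$, exhibiting $\hI_s$ as chainable. Two classical consequences now apply: chainable continua contain no simple closed curve (they are hereditarily unicoherent, so no two subcontinua can meet in a disconnected set, whereas $S^1$ fails this) and they are atriodic. This rules out any subset homeomorphic to a circle or to the letter Y.

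For the path component claim, let $P$ be a path component of $\hI_s$ with more than one point. Given distinct $\ux, \uy \in P$, any connecting path $\gamma : [0,1] \to P$ has image a Peano continuum, so the Hahn-Mazurkiewicz theorem (or direct trimming of $\gamma$ via first-visit/last-visit) produces an arc $A(\ux,\uy) \subseteq \gamma([0,1]) \subseteq P$ joining $\ux$ and $\uy$. Uniqueness of $A(\ux,\uy)$ follows from the absence of embedded circles: two distinct arcs from $\ux$ to $\uy$ would together contain a simple closed curve. For any three points $\ux, \uy, \uz \in P$, the absence of triods forces the three pairwise arcs to fit together linearly, with one of the three points lying between the other two. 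Fixing a basepoint $\ux_0 \in P$ and declaring an orientation on each of its (at most two) sides then yields a total order on $P$ whose order-intervals are precisely the arcs $A(\cdot, \cdot)$.

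Separability of $\hI_s$ supplies a countable order-dense subset of $P$, so a Cantor-style argument gives an order-isomorphism from $P$ to a subinterval $J \subseteq \R$. The main obstacle, and the most delicate part of the argument, will be to verify that the order topology on $P$ coincides with the subspace topology inherited from $\hI_s$, and hence that this order-isomorphism is a homeomorphism. The two topologies agree on each arc $A(\ux,\uy)$ by the paper's convention that an arc is intrinsically an interval, and they glue correctly across increasing unions of such arcs. The three cases in the definition of arc—open, half-open, closed—then arise according to whether the endpoints of $J$ are attained in $P$; non-attained ends, which may be limit points of $P$ in $\hI_s$ outside $P$, are where care is needed.
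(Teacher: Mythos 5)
The paper provides no proof of this theorem; it is simply cited from Ingram's book, so you are supplying an original argument. Your chainability route is the standard one and the first two thirds of your proof are sound: the pullback chains $\pi_N^{-1}(K_j)$ do exhibit $\hI_s$ as arc-like (the tail of $d$ is uniformly small once $N$ is large, the initial segment is controlled by shrinking $\delta$, and adjacency of the $V_j$ follows from surjectivity of $\pi_N$), and ``chainable $\Rightarrow$ hereditarily unicoherent and atriodic'' correctly rules out circles and simple triods. The unique-arc-between-two-points and total-order constructions in a path component $P$ are also fine.

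The gap is in the final step, and it arises from a misreading of the paper's somewhat unusual terminology. You set out to show that the order topology on $P$ coincides with the subspace topology, i.e.\ that $P$ is subspace-homeomorphic to an interval. This is false whenever $P$ is dense, which is exactly the case this paper cares most about: Theorem~\ref{main}(c) asserts that bi-dense path components carry full measure, and a proper dense subset of a compact Hausdorff space that were subspace-homeomorphic to an interval would be locally compact and hence open, whereas such a $P$ is a countable union of nowhere dense closed arcs and so of first category. The step ``they glue correctly across increasing unions of such arcs'' is precisely where the argument fails: when $P = \bigcup A_n$ and points of $A_m\setminus A_n$ (for $m\gg n$) accumulate on points interior to $A_n$ --- which is what density of $P$ forces --- the subspace topology on the union is strictly coarser than the order (colimit) topology. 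The paper heads this off in its very first definition: ``intrinsically homeomorphic to $\R$ (that is, if it is a continuous injective image of $\R$).'' With that weaker target the rest of your argument goes through, and more easily: you need only one-sided continuity of the order isomorphism, from the interval $J$ into $\hI_s$, and this is immediate because each closed order-interval of $J$ is sent onto a compact closed arc, on which the order and subspace topologies already agree. No gluing lemma is needed.
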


\begin{definition}[Locally leaf regular, terminal, and solitary points]
	A point $\ux\in\hI$ is called \emph{locally leaf regular} if its local path
	component is homeomorphic to $(0,1)$, a \emph{terminal point} if its local
	path component is homeomorphic to $[0,1)$ and a \emph{solitary point} if
	its local path component is just itself.
\end{definition}

\begin{definition}[End continuum]
	A continuum~$K$ is an \emph{end continuum} if, whenever $A$ and $B$ are
	continua with $K\subseteq A$ and $K\subseteq B$, then either $A\subseteq B$ or
	$B\subseteq A$.
\end{definition}

The following result is Lemma~7 of~\cite{BBD}.

\begin{theorem}[Barge, Brucks, \& Diamond]\label{BBD}
	If $K$ is a continuum with $0\in K_n$ for infinitely many $n$, then $K$ is
	an end continuum.
\end{theorem}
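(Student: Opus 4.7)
The plan is to take any two continua $A, B$ containing $K$ and show directly that one must contain the other, using two ingredients: a nesting property of projections forced by the hypothesis, and a rigidity property of threads in the inverse limit.

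For the first ingredient, note that for any continuum $C$ the projection $C_n = \pi_n(C)$ is an interval in $I = [0,1]$. Whenever $0 \in K_n$, the interval $K_n$ has $0$ as its left endpoint, and therefore any subinterval of $[0,1]$ containing $K_n$ must also be left-anchored at~$0$; write $A_n = [0,\alpha_n]$ and $B_n = [0,\beta_n]$. Two intervals left-anchored at the same point are automatically nested, so for each such $n$ we have $A_n \subseteq B_n$ or $B_n \subseteq A_n$. Since the hypothesis gives infinitely many such $n$, pigeonhole on the two nesting directions produces an infinite set $S \subseteq \N$ on which, say, $A_n \subseteq B_n$ for every $n \in S$ (the other case is symmetric and yields $B \subseteq A$).

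For the second ingredient, recall that in $\hI$ the earlier coordinates of any thread $\ub$ are determined by its $n$-th coordinate via $\ub_k = f^{n-k}(\ub_n)$ for $k \leq n$, since $\pi_k = f^{n-k}\circ \pi_n$. Given $\ua \in A$ and $n \in S$, the inclusion $a_n \in A_n \subseteq B_n$ produces some $\ub^{(n)} \in B$ with $\pi_n(\ub^{(n)}) = a_n$, and rigidity then forces $\ub^{(n)}_k = a_k$ for all $k \leq n$. Hence $d(\ub^{(n)},\ua) \leq \sum_{k>n}2^{-k} = 2^{-n}$, so $\ub^{(n)} \to \ua$ as $n \to \infty$ through $S$; closedness of $B$ gives $\ua \in B$. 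This shows $A \subseteq B$, and therefore $K$ is an end continuum.

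I do not expect a real obstacle here: the only subtle point is recognizing that the hypothesis ``$0\in K_n$'' is doing double duty---first to force the left-anchoring of $A_n$ and $B_n$ that makes their projections nested, and then (combined with the backward rigidity of threads) to obtain the decay $2^{-n}\to 0$---after which the proof is essentially a single pigeonhole followed by a closure argument.
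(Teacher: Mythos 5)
The paper does not prove this theorem itself: it is imported as Lemma~7 of the cited Barge--Brucks--Diamond paper, so there is no in-text proof to compare against. Your proposal is a correct, self-contained proof. The two key observations are sound: (i)~since $A_n$ and $B_n$ are closed subintervals of $I=[0,1]$ containing the left-anchored interval $K_n$ whenever $0\in K_n$, they are of the form $[0,\alpha_n]$ and $[0,\beta_n]$ and hence nested in one direction or the other; pigeonholing over the infinitely many such $n$ gives an infinite $S$ with, say, $A_n\subseteq B_n$ for $n\in S$; (ii)~the backward determinism $b_k = f^{n-k}(b_n)$ for $k\le n$ then turns the projection inclusion at level $n$ into a $2^{-n}$-approximation of any $\ua\in A$ by a point of $B$, and compactness of $B$ closes the argument. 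Both steps are rigorous as written (one tiny cosmetic slip: you write $\ub_k = f^{n-k}(\ub_n)$ where you mean the scalar coordinates $b_k = f^{n-k}(b_n)$, but the intent is clear). This is essentially the standard argument one would expect for this fact, exploiting that $0$ is an endpoint of the core so that any projection of a continuum containing $K$ at such a level is forced to be left-anchored.
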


\begin{remark}\label{critimage} 
	Since $f^{-1}(0) = \{1\}$ and $f^{-1}(1) =\{c\}$, it follows that $0\in
	K_n$ for infinitely many $n$ if and only if $1\in K_n$ for infinitely many
	$n$ if and only if $c\in K_n$ for infinitely many $n$.	
\end{remark}

\section{Countable $0$-\simple decomposition of arcs}
\subsection{\Simple arcs and interval threads}

The next definition formalizes and names a standard tool in the theory of
inverse limits of interval maps. 

\begin{definition}[Interval thread]
	Let $J_0, J_1, J_2, \dots$ be a sequence of intervals, with $f(J_{i+1}) =
	J_i$ for each~$i$. We write
	\[
		\uJ = \thrn{J} := \{\ux\in\hI\,:\, x_i\in J_i \text{ for each } i\} \subseteq \hI,
	\]
	and call~$\uJ$ an \emph{interval thread}. Equivalently, we can write $\uJ =
	\ilim(J_i, f_{|J_i})$.
\end{definition}

Every continuum~$K$ is an interval thread (on a sequence of closed intervals),
since each~$K_n$ is a closed interval and $K=\thrn{K}$.

\begin{definition}[\Simple arc and \simple interval thread]
	 An arc $\gamma$ is \emph{$m$-\simple (over the interval~$J$)} if
	$\pi_{m\vert\gamma}$ is a homeomorphism onto its image $\pi_m(\gamma)=J$ .
	An arc is called \emph{\simple} if it is \simple for some $m\ge0$.

	An interval thread $\thrn{J}$ is \emph{$m$-\simple} if $f$ sends $J_{i+1}$
	homeomorphically onto $J_i$ for all $i\geq m$, or, equivalently, if
	$c\not\in\Intt J_i$ for all $i> m$. An interval thread is called
	\emph{\simple} if it is \simple for some $m\ge0$.
\end{definition}

An arc $\gamma$ (respectively an interval thread $\uJ$) is $m$-\simple if and
only if $\hf^{-m}(\gamma)$ (respectively $\hf^{-m}(\uJ)$) is $0$-\simple.
Therefore many proofs of properties of \simple arcs and interval threads reduce
to the $0$-\simple case.

Any \simple arc $\gamma$ is equal to the interval thread $\thrn{\gamma}$ and any
\simple interval thread is an arc. Moreover, it is easy to check that an arc is
$m$-\simple if and only if it is an $m$-\simple interval thread. Because of this
equivalence we will go back and forth freely between the terminology and
notation of \simple arcs and \simple interval threads.

$0$-\simple arcs are closely related to the \textit{basic arcs} defined
 symbolically in \cite{bruin,jerana}, and elsewhere: a $0$-\simple arc is a
 (non-degenerate) subarc of a basic arc. 

By Theorem~\ref{BBD} and Remark~\ref{critimage}, if $K$ is a continuum but not
 an end continuum, then $c\in K_n$ for only finitely many $n$. We therefore
 have the following corollary:

\begin{corollary}[Brucks and Bruin~\cite{BB}]\label{BBDcor} 
	If $K$ is a continuum but not an end continuum, then it is a \simple closed
	arc.
\end{corollary}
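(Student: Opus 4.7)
The plan is to deduce Corollary~\ref{BBDcor} as a direct consequence of Theorem~\ref{BBD}, Remark~\ref{critimage}, and the equivalence between \simple interval threads and \simple arcs stated just before the corollary. The key observation is that failing to be an end continuum places a strong restriction on where~$c$ (equivalently, $0$ or $1$) can appear in the projections~$K_n$.

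First I would take the contrapositive of Theorem~\ref{BBD}: since $K$ is a continuum that is not an end continuum, $0\in K_n$ can hold for only finitely many~$n$. By Remark~\ref{critimage}, the same is true with $c$ in place of~$0$, so there is some $N\ge 0$ such that $c\notin K_n$ for every $n> N$. In particular, $c\notin\Intt K_n$ for every $n> N$.

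Next I would observe that because~$K$ is a continuum, each projection $K_n=\pi_n(K)$ is a subcontinuum of~$I\subseteq\R$, hence a closed interval; and as noted in the paper, $K=\thrn{K}$. Thus~$K$ is an interval thread on a sequence of closed intervals satisfying $c\notin\Intt K_n$ for all $n>N$, which is exactly the definition of an $N$-\simple interval thread. By the equivalence between \simple interval threads and \simple arcs stated above the corollary, $K$ is therefore an $N$-\simple arc; and since each~$K_n$ is a closed interval and $\pi_N$ restricted to~$K$ is a homeomorphism onto~$K_N$, this arc is closed. This gives exactly the conclusion of Corollary~\ref{BBDcor}. No real obstacle arises: the entire content of the corollary is packaged in Theorem~\ref{BBD} together with the definition of \simple interval thread, so the proof is essentially a bookkeeping exercise translating ``$c$ appears only finitely often in the projections'' into ``the thread is \simple from some index onward.''
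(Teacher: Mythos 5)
Your proposal is correct and follows essentially the same route as the paper, which states the corollary immediately after observing that Theorem~\ref{BBD} together with Remark~\ref{critimage} forces $c\in K_n$ for only finitely many~$n$, and then invokes the equivalence between \simple interval threads and \simple arcs. You have simply spelled out the bookkeeping (each $K_n$ is a closed interval, $K=\thrn{K}$, $c\notin\Intt K_n$ eventually, hence $N$-\simple, hence a closed arc) that the paper leaves implicit.
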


\subsection{The $0$-\simple decomposition}

\begin{definition}[$0$-\simple decomposition, node]
	A \emph{$0$-\simple decomposition} of an arc $\gamma$ is a countable collection of
	$0$-\simple arcs $\gamma\up{i}$ such that
	\begin{enumerate}
		\item $\gamma = \cup \gamma\up{i}$,
		\item $\gamma^{(i+1)}\cap \gamma\up{i}$ is
		a single point, called a \emph{node of the decomposition} and
		denoted~$\uz\up{i}$, and
		\item $\gamma\up{i}\cap \gamma\up{j} = \emptyset$ when $|i-j|>1$.
	\end{enumerate}
	The $0$-\simple decomposition is called \emph{efficient} if whenever
	$\gamma'\subseteq \gamma$ is $0$-\simple, we have $\gamma'\subseteq \gamma\up{i}$
	for some $i$.

	If an arc has an efficient $0$-\simple decomposition, then this
	decomposition is unique and is determined by its nodes. In this case we
	refer to these nodes as \emph{the nodes of the arc}.
\end{definition}
The next lemma is certainly known to experts
but does not seem to be stated in the literature
in  the form we need;
it is implicit in \cite{bruin} and~\cite{jerana}. Rather 
than introduce the symbolic machinery used in those papers we maintain
a strictly topological perspective for brevity of exposition
and self-sufficiency.
\begin{lemma}\label{arcdecom} \ 
	\begin{enumerate}[(a)]
		\item Every \simple arc has a finite efficient $0$-\simple decomposition.
		\item Every closed arc which is contained in an open arc is \simple.
		\item Every open arc has an efficient $0$-\simple decomposition.
	\end{enumerate}
\end{lemma}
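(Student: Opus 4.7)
I treat the parts in the order (a), (b), (c), since (c) builds on the first two. For (a), given an $m$-\simple arc $\gamma$ with $J := \pi_m(\gamma)$, the plan is to cut $J$ at the finite set $C := J \cap \bigcup_{k=0}^{m-1} f^{-k}(c)$ of critical preimages; this partitions $J$ into finitely many subintervals $J^{(0)},\dots,J^{(r)}$ on each of which $f^m$ is monotone. Lifting via $\pi_m|_\gamma^{-1}$ gives candidate pieces $\gamma^{(i)} := (\pi_m|_\gamma)^{-1}(J^{(i)})$, each $0$-\simple because $\pi_0|_{\gamma^{(i)}} = f^m\circ\pi_m|_{\gamma^{(i)}}$ is a homeomorphism. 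Consecutive pieces share the lift of a single point of $C$, and non-consecutive pieces are disjoint. Efficiency follows dually: if $\gamma'\subseteq\gamma$ is $0$-\simple, then $f^m$ is monotone on $\pi_m(\gamma')$, which therefore cannot cross a point of $C$ and so lies in some $J^{(i)}$.

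For (b), the plan is to reduce to Corollary~\ref{BBDcor} by showing that $\gamma$ is not an end continuum. As $\gamma$ is a closed arc strictly contained in the open arc $\gamma'$ (the two have incompatible intrinsic topologies), the intrinsic homeomorphism $\gamma'\cong(0,1)$ sends $\gamma$ to a compact subinterval $[a,b]\subset(0,1)$ with $0<a<b<1$, leaving room on each side. Picking extensions $A,B\subseteq\gamma'$ corresponding to $[a',b]$ and $[a,b']$ (with $0<a'<a<b<b'<1$) gives subcontinua of $\hI$ that each contain $\gamma$ but satisfy neither $A\subseteq B$ nor $B\subseteq A$. Hence $\gamma$ is not an end continuum, and Corollary~\ref{BBDcor} identifies it as \simple.

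For (c), the plan is to exhaust the open arc $\gamma$ by a nested sequence $\gamma_1\subseteq\gamma_2\subseteq\cdots$ of closed sub-arcs with $\gamma=\bigcup_n\gamma_n$. By (b) each $\gamma_n$ is \simple, and by (a) each has a finite efficient $0$-\simple decomposition. Call $\uz\in\gamma$ an \emph{intrinsic node} if $\uz$ is not interior to any $0$-\simple sub-arc of $\gamma$. A short local argument---using that $0$-\simple sub-arcs of $\gamma_n$ and of $\gamma$ coincide in a neighborhood of any point interior to $\gamma_n$---shows that the interior nodes of each $\gamma_n$ are precisely the intrinsic nodes of $\gamma$ lying in the interior of $\gamma_n$. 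Consequently the intrinsic nodes form a discrete countable subset of $\gamma$, linearly ordered by the parameterization. Between two consecutive intrinsic nodes $\uz^{(i)},\uz^{(i+1)}$, one takes a slightly larger closed sub-arc of $\gamma$ containing them in its interior but no further intrinsic nodes; applying (b) and (a) to this enlargement, the middle piece of its finite decomposition is exactly the sub-arc between $\uz^{(i)}$ and $\uz^{(i+1)}$, which is therefore $0$-\simple. Efficiency is immediate from the definition of intrinsic node. The main obstacle is this coherence step: matching the local notion of node coming from each finite decomposition of $\gamma_n$ with the intrinsic notion on $\gamma$, ensuring ``consecutive'' is well-defined, and verifying that the resulting pieces are themselves $0$-\simple arcs of $\gamma$ rather than merely of the individual $\gamma_n$.
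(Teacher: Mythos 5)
Your proposal follows essentially the same route as the paper: cutting at preimages of the critical point for (a), reducing to Corollary~\ref{BBDcor} for (b) by exhibiting two incomparable continuum extensions, and exhausting the open arc by closed sub-arcs and taking the union of their nodes for (c). The paper is terser than you on the coherence step in (c), but the characterization you implicitly invoke---that the nodes are exactly the points $\ux$ with $x_j=c$ for some $j\ge 1$, which is local and independent of $n$---is what makes the union of the node sets of the $\gamma_n$ consistent, and it also disposes of the end pieces (half-open or open), which you should spell out alongside the ``between consecutive nodes'' case.
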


\begin{proof}
	For (a), let~$\gamma=\thrn{\gamma}$ be an $m$-\simple arc. Let the closed
	 intervals of monotonicity of $f^m_{\vert\gamma_m}$ be
	 $I\up{1},\dots,I\up{N} \subseteq \gamma_m$, ordered from left to right;
	 and define~$z\up{i}$ by $I\up{i} \cap I\up{i+1} = \{z\up{i}\}$ for $1\le
	 i\le N-1$.

	By assumption, $f_{\vert\gamma_\ell}$ is a homeomorphism for all $\ell>m$.
	Therefore, if~$1\le i\le N$ and $k>0$, there is a unique interval
	$I\up{i}_{m+k} \subseteq \gamma_{m+k}$ for which $f^k\colon I\up{i}_{m+k} \to
	I\up{i}$ is a homeomorphism. So for each such~$i$ there is a $0$-\simple
	interval thread
	\[
		\uI\up{i} = \thr{f^m(I\up{i}), \dots, f(I\up{i}), I\up{i},
			I\up{i}_{m+1}, I\up{i}_{m+2}, \dots} \subseteq \gamma.
	\]

	Similarly, if $1\le i < N$ and $k>0$, there is a unique $z_{m+k}\up{i}\in \gamma_{m+k}$ for which $f^k(z_{m+k}\up{i}) = z\up{i}$, giving threads
	\[
		\uz\up{i} = \thr{f^m(z\up{i}), \dots, f(z\up{i}), z\up{i},
			z\up{i}_{m+1}, z\up{i}_{m+2}, \dots} \in\gamma.
	\]

	It is straightforward to check that the collection of arcs $\uI\up{i}$
	($1\le i\le N$) is a $0$-\simple decomposition of~$\gamma$ with
	nodes~$\uz\up{i}$. Since the nodes are exactly the points $\ux\in\gamma$
	satisfying $x_j=c$ for some $1\le j\le m$, no $0$-\simple subarc of $\gamma$
	can contain a node in its interior. The decomposition is therefore
	efficient.

	(b) follows immediately from Corollary~\ref{BBDcor}, since a closed arc
	which is contained in an open arc cannot be an end continuum.

	For~(c), let $\gamma$ be an open arc, and write $\gamma$ as an increasing
	union \mbox{$\gamma=\bigcup \gamma\up{n}$} of closed (and therefore \simple) arcs.
	Then the union of the nodes of the arcs~$\gamma\up{n}$ determines an
	efficient $0$-\simple decomposition of~$\gamma$.
\end{proof}

\section{Global leaf regularity}

\subsection{The metric on arcs}

\begin{definition}[The metric $\rho$ on an open or \simple arc~$\gamma$]
	\label{arc-metric} 
	Let $\gamma$ be an open arc or a \simple arc, and let $\ux$
	and $\ux'$ be distinct elements of $\gamma$. We define
	\[
		\rho(\ux, \ux') = 
			\sum_{i=0}^{N-1} \left|z_0\up{i} - z_0\up{i+1} \right|,
	\]
	where $\uz\up{0} = \ux$, $\uz\up{N} = \ux'$, and $\uz\up{1}, \dots,
	\uz\up{N-1}$ are the nodes of the efficient $0$-\simple decomposition of the
	(\simple) closed subarc of~$\gamma$ with endpoints $\ux$ and~$\ux'$.
\end{definition}
 
\begin{remark}\label{altmetric} 
	A more standard metric on~$\gamma$ is the intrinsic metric: choose a
	parameterization $\sigma:[0,1]\raw\gamma$ of the subarc with endpoints
	$\ux$ and $\ux'$, and set
	\[
		\beta(\ux, \ux') = \sup \left\{
			\sum_{i=0}^{n-1} d(\sigma(t_i), \sigma(t_{i+1})),
		\right\}
	\]
	where the supremum is over all subdivisions $0=t_0<t_1<\dots<t_n=1$ of
	$[0,1]$. We will show that $\beta(\ux, \ux') = \frac{2s}{2s-1}\,\rho(\ux,
	\ux')$ for all $\ux, \ux'\in\gamma$, so that the two metrics are just
	scaled versions of one another. The use of $\rho$ makes some calculations
	cleaner.

	\medskip

	It is enough to show this in the case where $\gamma$ is $0$-\simple, since
	it is immediate from the definitions that (using the notation of
	Definition~\ref{arc-metric}) we have $\rho(\ux, \ux') = \sum_{i=0}^{N-1}
	\rho(\uz\up{i}, \uz\up{i+1})$ and $\beta(\ux, \ux') = \sum_{i=0}^{N-1}
	\beta(\uz\up{i}, \uz\up{i+1})$.

	Assume, then, that $\gamma$ is $0$-\simple, so that $\rho(\ux, \ux') =
	|x_0-x_0'|$. Since $f^n_{|\gamma_n}$ is a homeomorphism with derivative
	$\pm 1/s^n$ for each $n>0$, we have that $|x_n-x_n'| = |x_0-x_0'|/s^n$, from
	which it follows that $d(\ux, \ux') = \frac{2s}{2s-1}\,\rho(\ux,
	\ux')$. On the other hand, again using that $f^n_{|\gamma_n}$ is a
	homeomorphism for all~$n$, if $\ux''$ lies on the subarc of~$\gamma$ with
	endpoints $\ux$ and $\ux'$, then $x_n''$ lies between $x_n$ and $x_n'$ for
	all~$n$, so that $d(\ux, \ux') = d(\ux, \ux'') + d(\ux'', \ux')$. It
	follows that $\beta(\ux, \ux') = d(\ux, \ux') = \frac{2s}{2s-1}\,\rho(\ux,
	\ux')$ as required.
\end{remark}

Given  a \simple closed arc $\gamma$ with endpoints~$\ux$ and $\ux'$, we write
\mbox{$\rho(\gamma) := \rho(\ux, \ux')$}.

\begin{lemma}
	If~$\gamma$ is a \simple closed arc, then $\rho(\hf(\gamma)) =
	s\rho(\gamma)$.
\end{lemma}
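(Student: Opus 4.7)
The plan is to reduce to the $0$-\simple case via the efficient $0$-\simple decomposition and then compute directly using the piecewise linearity of the tent map, whose branches have slopes $\pm s$.

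For the base case, suppose $\gamma$ is $0$-\simple with endpoints $\ux,\ux'$, so that $\rho(\gamma)=|x_0-x_0'|$. Since $\pi_1\circ\hf=\pi_0$ and $\pi_0|_\gamma$ is a homeomorphism, $\hf(\gamma)$ is a $1$-\simple arc. Using the node characterization from the proof of Lemma~\ref{arcdecom}(a), the interior nodes of its efficient $0$-\simple decomposition are exactly the points $\hf(\uz)$ with $z_0=c$; there is at most one such node, present precisely when $c\in\Intt(\gamma_0)$. A direct computation using $|f(a)-f(b)|=s|a-b|$ on monotonicity intervals of $f$ then yields $\rho(\hf(\gamma))=s|x_0-x_0'|=s\rho(\gamma)$; when $c$ is between $x_0$ and $x_0'$, this relies on the identity $|f(x_0)-f(c)|+|f(c)-f(x_0')|=s(|x_0-c|+|c-x_0'|)=s|x_0-x_0'|$.

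For the general case, let $\gamma$ be $m$-\simple with efficient $0$-\simple decomposition $\gamma=\bigcup_{i=1}^N \gamma^{(i)}$ and nodes $\uz^{(i)}$, as supplied by Lemma~\ref{arcdecom}(a). By the additivity noted in Remark~\ref{altmetric}, $\rho(\gamma)=\sum_i \rho(\gamma^{(i)})$, and the base case gives $\rho(\hf(\gamma^{(i)}))=s\rho(\gamma^{(i)})$. It then suffices to show $\rho(\hf(\gamma))=\sum_i \rho(\hf(\gamma^{(i)}))$. Each $\hf(\uz^{(i)})$ is itself a node of the efficient $0$-\simple decomposition of the $(m+1)$-\simple arc $\hf(\gamma)$, since $z_j^{(i)}=c$ for some $1\le j\le m$ implies $(\hf(\uz^{(i)}))_{j+1}=c$ with $2\le j+1\le m+1$; thus the subdivision of $\hf(\gamma)$ by the $\hf(\uz^{(i)})$ is a coarsening of its efficient $0$-\simple decomposition, and the additivity over nodes furnished by Remark~\ref{altmetric} delivers the desired equality.

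The main obstacle I anticipate is this node bookkeeping: one must recognize that the efficient $0$-\simple decomposition of $\hf(\gamma)$ can acquire new interior nodes (one inside each $\hf(\gamma^{(i)})$ such that $c\in\Intt(\gamma_0^{(i)})$), but that these new nodes all lie strictly inside individual pieces $\hf(\gamma^{(i)})$, so that the additivity of $\rho$ across the coarser subdivision by the $\hf(\uz^{(i)})$ remains intact and matches the piecewise application of the base case.
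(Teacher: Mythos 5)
Your proof is correct and follows essentially the same route as the paper's: reduce to the $0$-\simple case via the additivity of $\rho$ over the efficient $0$-\simple decomposition (Remark~\ref{altmetric}), then compute directly for a $0$-\simple arc by splitting on whether $c$ lies in $\Intt\gamma_0$. The paper performs this reduction in a single sentence by citing Remark~\ref{altmetric}; you spell out the node bookkeeping for $\hf(\gamma)$ in more detail than strictly necessary (the additivity of $\rho$ holds across any subdivision of a \simple arc, whether or not the subdivision points are nodes of the efficient decomposition, so verifying that the $\hf(\uz\up{i})$ are nodes is sufficient but not needed), but the extra care does no harm.
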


\begin{proof}
	As in Remark~\ref{altmetric}, it suffices to show this when~$\gamma =
	\thrn{\gamma}$ is \mbox{$0$-\simple}. If $c\not\in\Intt\gamma_0$ then
	$\hf(\gamma)$ is also $0$-\simple, and the result follows since
	$|\hf(\ux)_0 - \hf(\ux')_0| = |f(x_0) - f(x'_0)| = s|x_0-x_0'|$ (where
	$\ux$ and $\ux'$ are the endpoints of~$\gamma$).

	On the other hand, if $c\in\Intt\gamma_0$, let $\ux''\in\gamma$ be the
	point with $x''_0=c$: then the efficient $0$-\simple decomposition of
	$\hf(\gamma)$ has node $\hf(\ux'')$, and hence $\rho(\hf(\ux), \hf(\ux')) =
	\rho(\hf(\ux),
	\hf(\ux'')) + \rho(\hf(\ux''), \hf(\ux')) = s\rho(\ux, \ux') +
	s\rho(\ux'', \ux') = s\rho(\ux, \ux')$ as required.
\end{proof}

\begin{corollary}\label{long}
	If $\gamma$ is a \simple closed arc with $\gamma_\ell = I$ for some $\ell\ge
	0$, then $\rho(\gamma)\ge s^\ell$.
\end{corollary}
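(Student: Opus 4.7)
The plan is to reduce to the case $\ell = 0$ by pulling $\gamma$ back under $\hf^{\ell}$, iterating the preceding lemma, and then bounding $\rho$ of the resulting arc from below by the length of its $\pi_0$-image.

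First I would verify that $\hf^{-\ell}(\gamma)$ is itself a flat closed arc, so that iteration of the previous lemma is legitimate. If $\gamma$ is $m$-flat, then because $\pi_m = f^\ell \circ \pi_{m+\ell}$ on $\gamma$, the map $\pi_{m+\ell}|_\gamma$ is also injective and hence a homeomorphism; combining this with $\pi_{m+\ell} = \pi_m \circ \hf^{-\ell}$ forces $\pi_m|_{\hf^{-\ell}(\gamma)}$ to be a homeomorphism, so $\hf^{-\ell}(\gamma)$ is $m$-flat. The same reasoning makes each intermediate arc $\hf^{-k}(\gamma)$ for $0 \le k \le \ell$ flat, so the scaling lemma can be applied $\ell$ times to give
\[
\rho(\gamma) = s^\ell \, \rho\bigl(\hf^{-\ell}(\gamma)\bigr).
\]
Moreover $\pi_0 \circ \hf^{-\ell} = \pi_\ell$, hence $\pi_0(\hf^{-\ell}(\gamma)) = \gamma_\ell = I$.

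It then suffices to show that for every flat closed arc $\alpha$ one has $\rho(\alpha) \ge |\pi_0(\alpha)|$. By Lemma~\ref{arcdecom}(a), $\alpha$ admits a finite efficient $0$-flat decomposition $\alpha = \bigcup_{i=1}^N \alpha\up{i}$; each piece is $0$-flat, so $\rho(\alpha\up{i}) = |\pi_0(\alpha\up{i})|$, and the additivity of $\rho$ along the decomposition together with $\pi_0(\alpha) = \bigcup_i \pi_0(\alpha\up{i})$ yields
\[
\rho(\alpha) = \sum_{i=1}^N |\pi_0(\alpha\up{i})| \;\ge\; \left|\, \bigcup_{i=1}^N \pi_0(\alpha\up{i}) \,\right| = |\pi_0(\alpha)|.
\]
Applied to $\alpha = \hf^{-\ell}(\gamma)$, this gives $\rho(\hf^{-\ell}(\gamma)) \ge |I| = 1$, and combining with the scaling identity above concludes $\rho(\gamma) \ge s^\ell$. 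The only subtle point is the bookkeeping that flatness propagates under $\hf^{-1}$; everything else is a direct chain of equalities and one inequality coming from subadditivity of interval length.
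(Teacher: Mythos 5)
Your proof is correct and is essentially the same argument the paper gives: pull $\gamma$ back by $\hf^{\ell}$, apply the scaling lemma $\rho(\hf(\cdot))=s\,\rho(\cdot)$ iteratively, and observe from the definition of $\rho$ that a flat closed arc whose $\pi_0$-image is all of $I$ has $\rho\ge 1$. You simply unpack (correctly) the two points the paper leaves implicit — that flatness propagates under pullback by $\hf$, and that $\rho(\alpha)=\sum_i|\pi_0(\alpha^{(i)})|\ge|\pi_0(\alpha)|$ via the efficient $0$-flat decomposition.
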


\begin{proof}
	We have $(\hf^{-\ell}(\gamma))_0 = I$, so that $\rho(\hf^{-\ell}(\gamma))\ge 1$ by Definition~\ref{arc-metric}.
\end{proof}

\subsection{Density} 

Recall that a subset~$K$ of $\hI$ is \emph{$\epsilon$-dense} in~$\hI$ if
$d(\ux, K) < \epsilon$ for all $\ux\in\hI$.

\begin{lemma}\label{density1}
	Let $K$ be a continuum.
	\begin{enumerate}[(a)]
		\item If $K_\ell = I$ for some $\ell>0$, then $K$ is $2^{-\ell}$-dense
		in $\hI$.

		\item Let $|\pi_0(K)|=\delta > 0$. If $N \ge -2\log(\delta) /
		\log(s^2/2)$, then for all $j>0$, $\hf^{N+j}(K)$ is $2^{-j}$-dense in
		$\hI$.
	\end{enumerate}
\end{lemma}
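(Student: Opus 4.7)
The plan is to prove part (a) directly from the definitions of the metric and the inverse limit, and then reduce part (b) to part (a) via Lemma~\ref{well-known} and the shift relations.

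For part (a), given an arbitrary $\ux \in \hI$, I will produce a nearby point of $K$ by exploiting that $K_\ell = I$ contains $x_\ell$. Since $x_\ell \in K_\ell$, there is some $\uy \in K$ with $y_\ell = x_\ell$. But then for every $0\le i<\ell$ we also have $y_i = f^{\ell-i}(y_\ell) = f^{\ell-i}(x_\ell) = x_i$. Thus the first $\ell+1$ coordinates of $\ux$ and $\uy$ agree, and
\[
d(\ux,\uy) \;=\; \sum_{i=\ell+1}^\infty \frac{|x_i-y_i|}{2^i} \;\le\; \sum_{i=\ell+1}^\infty \frac{1}{2^i} \;=\; \frac{1}{2^\ell},
\]
since $|x_i-y_i|\le|I|=1$. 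This gives $d(\ux,K)\le 2^{-\ell}$ (and one can check that strict inequality holds except in degenerate cases where all $x_i,y_i$ with $i>\ell$ are endpoints of $I$).

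For part (b), set $J := \pi_0(K) = K_0$, so $|J|=\delta$. For $N \ge -2\log(\delta)/\log(s^2/2)$, Lemma~\ref{well-known} yields $f^N(J)=I$, and since $f(I)=I$ we get $f^n(J)=I$ for every $n\ge N$. Now I use the fundamental relation $\pi_{m+n}\circ\hf^n = \pi_m$: for $m\le N+j$ we obtain $\pi_m\circ\hf^{N+j} = f^{N+j-m}\circ\pi_0$ on $\hI$, and taking $m=j$ gives
\[
\pi_j\bigl(\hf^{N+j}(K)\bigr) \;=\; f^N(K_0) \;=\; I.
\]
Since $\hf^{N+j}(K)$ is a continuum whose projection at level $j>0$ equals $I$, part (a) applies and yields that $\hf^{N+j}(K)$ is $2^{-j}$-dense in $\hI$.

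There is no substantive obstacle; the content is entirely the combination of Lemma~\ref{well-known}, which converts a positive-length interval into $I$ after enough iterations of $f$, and the commutation between $\hf$ and the projections, which translates this into a statement about the coordinates of $\hf^{N+j}(K)$. The only subtlety worth double-checking is the strict-versus-nonstrict form of ``$\epsilon$-dense'' used in the paper, but the computation above shows it makes no difference to the rest of the argument.
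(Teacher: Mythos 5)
Your proof is correct and takes essentially the same approach as the paper, which states only that part (a) is obvious and that part (b) follows from Lemma~\ref{well-known} together with part (a); you have simply spelled out the routine details. Your worry about strict inequality in part (a) is unnecessary: since $f^{-1}(1)=\{c\}$ and $c$ is not an endpoint of $I$, no thread in $\hI$ can have $|x_i-y_i|=1$ for every $i>\ell$, so the estimate $d(\ux,\uy)<2^{-\ell}$ is in fact always strict.
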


\begin{proof}
	Part (a) is obvious, and~(b) follows from Lemma~\ref{well-known} and~(a).
\end{proof}

\begin{definition}[Metrically infinite]
	A path-connected subset~$S$ of~$\hI$ is \emph{metrically infinite} if, for
	all $N\ge 0$, there is a \simple closed arc~$\gamma\subseteq S$ with
	$\rho(\gamma)>N$.
\end{definition}

\begin{lemma}\label{coollem}
	A path connected subset~$S$ of~$\hI$ is dense in~$\hI$ if and only if for
	every~$\ell\ge 0$ there is a \simple closed arc~$\gamma\subseteq S$ with
	$\gamma_\ell=I$. In this case, $S$ is also metrically infinite.
\end{lemma}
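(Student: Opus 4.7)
For the backward direction, if for each $\ell\geq 0$ there is a flat closed arc $\gamma^{(\ell)}\subseteq S$ with $\gamma^{(\ell)}_\ell=I$, then Lemma~\ref{density1}(a) gives that $\gamma^{(\ell)}$, and hence $S$, is $2^{-\ell}$-dense in $\hI$, so taking $\ell\to\infty$ yields density of $S$; Corollary~\ref{long} gives $\rho(\gamma^{(\ell)})\geq s^\ell$, unbounded in $\ell$, so $S$ is metrically infinite.

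For the forward direction I would first reduce to the case $\ell=0$ using the homeomorphism $\hf^{-\ell}$: since $\pi_{m+\ell}\circ\hf^\ell=\pi_m$ for every $m\ge0$, a flat closed arc $\gamma'\subseteq\hf^{-\ell}(S)$ with $\gamma'_0=I$ yields $\gamma=\hf^\ell(\gamma')\subseteq S$, which is flat closed with $\pi_\ell(\gamma)=I$. So it suffices to produce, for any path connected dense $S$, a flat closed arc $\gamma\subseteq S$ with $\gamma_0=I$. The path component $P\subseteq\hI$ containing $S$ is an arc by Theorem~\ref{basics}, necessarily non-compact since $\hI$ is not itself an arc; and a connectedness argument shows $\Intt(P)\subseteq S$: if some $\uz\in\Intt(P)\setminus S$ existed, then the connected set $S$ would be contained in one of the two components of $P\setminus\{\uz\}$, contradicting density of $S$ in $P$.

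To finish I would locate nodes $\uz,\uz'$ of the efficient $0$-\simple decomposition of $\Intt(P)$ (Lemma~\ref{arcdecom}) with $z_1=c$ and $z'_2=c$. Density of $S$ makes $\pi_1(P)$ a dense connected subset of $I$ containing values on both sides of $c$; path-connectedness of $P$ then produces an interior thread $\uz\in P$ with $z_1=c$ across which $P$ switches from $w_1<c$ to $w_1>c$, and this crossing makes $\uz$ a node. The same argument at depth $2$ gives $\uz'\in P$ with $z'_2=c$. Then $z_0=f(c)=1$ and $z'_0=f^2(c)=0$, both $\uz,\uz'\in\Intt(P)\subseteq S$, and the closed subarc $\gamma\subseteq P$ joining them lies in $\Intt(P)\subseteq S$, is \simple by Lemma~\ref{arcdecom}(b) (being a closed arc inside the open arc $\Intt(P)$), and has $\pi_0$-image a connected subset of $I$ containing $0$ and $1$, hence equal to $I$. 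The step I expect to require the most care is this last node argument --- specifically, justifying that path-connectedness plus an intermediate value argument on $\pi_m\vert_P$ really produces crossing nodes (and not merely tangencies) at the desired low depths, and that the resulting nodes fall in the interior of $P$ so that the subarc joining them stays inside $\Intt(P)$ and is \simple.
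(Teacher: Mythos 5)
Your backward direction matches the paper's. For the forward direction your route is genuinely different from the paper's: the paper exhausts $S$ by a nested sequence of closed (hence \simple) arcs $\gamma\up{k}$ and argues by contradiction that if $\gamma\up{k}_\ell\neq I$ for every $k$, then by Lemma~\ref{well-known} all $\gamma\up{k}_{\ell+2}$ have length at most $2/s^2<1$, so some open interval $J$ misses every $\gamma\up{k}_{\ell+2}$ and $\pi_{\ell+2}\I(J)$ misses $S$, contradicting density. You instead construct the required arc directly by locating threads $\uz,\uz'\in S$ with $z_1=c$ and $z'_2=c$ (so $z_0=1$ and $z'_0=0$) and taking the closed subarc joining them; Lemma~\ref{arcdecom}~(b) then makes it \simple, and $\pi_0$ of it is a connected subset of $I$ containing $0$ and $1$, hence all of $I$. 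This works and is arguably more explicit than the paper's argument, but two things should be cleaned up. First, the step ``$\Intt(P)\subseteq S$'' is not justified by the connectedness argument you give and is not true in general: $S$ dense in $\hI$ does not mean $S$ is dense in $P$ in the arc topology, and a proper sub-ray of $P$ can certainly be dense in $\hI$. Fortunately you do not need it: just observe (as the paper does) that a dense path connected $S$ must itself be an open or half-open arc, trim an endpoint if necessary so that $S$ is an open arc, and then run your argument inside $S$ rather than inside $\Intt(P)$: $\pi_1(S)$ and $\pi_2(S)$ are dense connected subsets of $[0,1]$, hence both contain $(0,1)\ni c$, which produces $\uz,\uz'\in S$ directly. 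Second, the concern you flag about ``crossing nodes versus tangencies'' is a non-issue: you never need $\uz$ and $\uz'$ to be nodes of the $0$-\simple decomposition. All that is used is that they are points of the open arc $S$ with $z_0=1$ and $z'_0=0$; the subarc joining them lies in $S$ because $S$ is an arc, and is \simple by Lemma~\ref{arcdecom}~(b) because it is a closed arc contained in an open arc, independent of any node or crossing structure at $\uz,\uz'$.
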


\begin{proof}
	A \simple closed arc~$\gamma$ with $\gamma_\ell=I$ is $2^{-\ell}$-dense
	in~$\hI$ by Lemma~\ref{density1}~(a), which establishes sufficiency of the
	condition. Such an arc has $\rho(\gamma) \ge s^\ell$ by
	Corollary~\ref{long}, so the condition also implies that~$S$ is metrically
	infinite.

	For the converse, suppose that~$S$ is dense in~$\hI$, so that~$S$ is either
	an open arc or a half-open arc. Removing an endpoint in the half-open case,
	we can assume that~$S$ is an open arc, so that there is a continuous
	bijection $\sigma\colon(-1,1) \to S$. For each~$k\ge 2$, let
	$\gamma\up{k}=\sigma([-1+1/k, 1-1/k])\subseteq S$, a \simple closed arc. We
	show that for every~$\ell\ge 0$ there is some~$k$ with
	$\gamma\up{k}_\ell = I$, which will establish the result.

	Suppose for a contradiction that there is some fixed~$\ell$ such that
	$\gamma\up{k}_\ell\not=I$ for all~$k$. By Lemma~\ref{well-known} we have
	$|\gamma\up{k}_{\ell+2}| < 2/s^2 < 1$ for all~$k$. Since
	$\gamma\up{k}_{\ell+2}$ is an increasing sequence of intervals, there is an
	open interval~$J$ which is disjoint from all of
	the~$\gamma\up{k}_{\ell+2}$. Thus $\pi_{\ell+2}^{-1}(J)$ is disjoint from
	the dense set~$S$, which is the required contradiction.
\end{proof}

The converse of the last statement in the lemma is not true in general: there
may be metrically infinite path connected sets which are not dense.

\begin{definition}[Metrically bi-infinite and bi-dense open arcs]
	Let $\gamma$ be an open arc. We say that $\gamma$ is \emph{metrically
	bi-infinite} (respectively \emph{bi-dense}) if for some (and hence for all)
	$p\in\gamma$, both components of $\gamma\setminus\{p\}$ are metrically
	infinite (respectively dense in~$\hI$).
\end{definition}

\begin{lemma}
	If an open arc $\gamma$ is bi-dense then it is metrically bi-infinite, and
	is a path component of~$\hI$.
\end{lemma}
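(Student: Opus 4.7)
The plan is to prove the two assertions in turn. First, bi-density immediately implies metric bi-infiniteness: for any $p \in \gamma$, each component of $\gamma \setminus \{p\}$ is a dense path-connected subset of~$\hI$, and the final assertion of Lemma~\ref{coollem} states that any such subset is metrically infinite.

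For the second claim (that $\gamma$ is a path component) I would argue by contradiction. Suppose $\gamma$ is properly contained in its path component $P$; by Theorem~\ref{basics}, $P$ is an arc. Fix a path $\alpha\colon [0, 1] \to P$ with $\alpha(0) \in \gamma$ and $\alpha(1) \in P \setminus \gamma$, and set $t^* = \inf\{t : \alpha(t) \notin \gamma\}$. I will treat the principal case $\alpha(t^*) \notin \gamma$; the alternative reduces to this case by restarting the argument on a suitable sub-interval of~$[0, 1]$.

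The key technical tool, which I would establish first, is the following lemma: any compact path-connected subset $K$ of~$\gamma$ has the form $K = \sigma([a, b])$ for some closed $[a, b] \subseteq (0, 1)$, where $\sigma\colon (0, 1) \to \gamma$ is the parametrization of the open arc~$\gamma$. The proof rules out (i) disconnected $\sigma^{-1}(K)$: a gap in $(0, 1)$ between two parts of $\sigma^{-1}(K)$ would yield two distinct arcs in $\hI$ joining two points of~$K$, together containing a circle or~Y forbidden by Theorem~\ref{basics}; and (ii) unbounded $\sigma^{-1}(K)$, accumulating at an endpoint of~$(0, 1)$: this would force $K$ to contain a tail $\sigma((0, \epsilon))$, which is dense in~$\hI$ by bi-density, contradicting compactness of $K$ together with $\gamma \neq \hI$. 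Applying this lemma to each $\alpha([0, t])$ for $t < t^*$ yields nested closed intervals $[a_t, b_t] \subseteq (0, 1)$. Since $\alpha(t) \to \alpha(t^*) \notin \gamma$ and $\sigma^{-1}(\alpha(t)) \in [a_t, b_t]$, this parameter cannot converge in~$(0, 1)$, so, without loss of generality, $a_t \to 0$; hence $Q := \alpha([0, t^*]) \supseteq \sigma((0, s_0])$ with $s_0 = \sigma^{-1}(\alpha(0))$.

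An auxiliary step shows that $P$ is an open arc: if $P$ had an endpoint, the parametrization of $P$ would force one end of $\gamma$ to converge to that endpoint, contradicting bi-density. Therefore $Q$ is a closed arc contained in an open arc, so by Lemma~\ref{arcdecom}(b) it is \simple, giving $\rho(Q) < \infty$. On the other hand, $\sigma((0, s_0]) \subseteq Q$ is a component of $\gamma \setminus \{\sigma(s_0)\}$, which is metrically infinite by Part~1, forcing $\rho(Q) = \infty$---the desired contradiction. The hardest step will be the compactness lemma, where the no-Y-or-circle property must be balanced carefully against bi-density to control both the connectedness and the boundedness of $\sigma^{-1}(K)$.
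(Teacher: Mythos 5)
Your proof is essentially correct but takes a considerably longer and more convoluted route than the paper's. The first assertion (metric bi-infiniteness via Lemma~\ref{coollem}) matches the paper. For the second, both you and the paper start from Theorem~\ref{basics} to conclude the path component $P$ is an arc, but the paper then argues in two lines: pick $q\in P\setminus\gamma$ and $p\in\gamma$; the unique closed arc $\Gamma$ joining them must contain one of the two components of $\gamma\setminus\{p\}$; $\Gamma$ is compact, so a dense ray inside it forces $\Gamma=\hI$, and then the \emph{other} dense ray is disjoint from $\Gamma\setminus\{p\}$, which is absurd. You instead build a ``compact path-connected subset of $\gamma$ is $\sigma([a,b])$'' lemma, run a nested-interval limit to show $Q=\alpha([0,t^*])$ swallows a dense ray, and then finish with Lemma~\ref{arcdecom}(b) and the $\rho$-metric. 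That final contradiction (a \simple closed arc cannot have $\rho=\infty$, yet containing a dense ray forces it) is a genuinely different and legitimate way to close the argument, and avoids having to invoke any fact about $\hI$ not being an arc.

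Two things you should tighten. First, the nested-interval apparatus and the definition of $t^*$ are unnecessary: $\alpha([0,1])$ is itself a closed arc inside the open arc $P$, so once you know $P$ is open, Lemma~\ref{arcdecom}(b) gives $\rho(\alpha([0,1]))<\infty$ directly, while $\alpha([0,1])$ contains a full dense ray of $\gamma\setminus\{\alpha(0)\}$ (by the same ``compact path-connected subsets correspond to closed subintervals'' reasoning you already invoke), giving the contradiction immediately. Second, your dismissal of the case $\alpha(t^*)\in\gamma$ by ``restarting on a sub-interval'' does not obviously terminate: if $\alpha(t^*)\in\gamma$ then the restarted infimum is again $t^*$, so no progress is made. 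This case is real, because $\gamma$ need not be open in $P$ with the $\hI$-subspace topology (only in its intrinsic arc topology), so $\alpha^{-1}(\gamma)$ need not be open in $[0,1]$. Since the whole $t^*$ machinery is dispensable as above, the cleanest repair is to delete it rather than patch the reduction. The auxiliary claim that $P$ is an open arc is correct but stated too briefly; spelling out that a half-open $P$ would give a closed arc $\tau([a,s_0])$ which is both \simple (hence $\rho<\infty$) and contains a dense ray (hence $\rho=\infty$) would make it rigorous and consistent with the rest of your argument.
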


\begin{proof} 
	That~$\gamma$ is metrically bi-infinite follows from Lemma~\ref{coollem}.
	To see that it is a path component of~$\hI$, suppose to the contrary that
	there is some~$q\not\in\gamma$ which is in the path component of~$\gamma$.
	Let $p\in\gamma$. By Theorem~\ref{basics} there is a unique closed arc
	$\Gamma$ in~$\hI$ with endpoints $p$ and $q$. Then~$\Gamma$ contains one of
	the two components of $\gamma\setminus\{p\}$, contradicting the fact that
	these rays are both dense in~$\hI$.
\end{proof}

\subsection{Global leaf regularity}
A point $\ux\in\hI$ is called \emph{globally leaf regular} if its path
component is a bi-dense (and hence metrically bi-infinite) open arc. Let $\GR$
denote the collection of globally leaf regular points,

\begin{lemma}\label{glr} 
	Let $\ux\in\hI$. The following two conditions each imply that $\ux\in\GR$.
	\begin{enumerate}[(a)]
		\item There exists $\epsilon>0$ such that, for arbitrarily large~$n$,
		there is a \simple closed arc~$\gamma$ with (i)
		$\ux\in\Intt(\hf^{n}(\gamma))$; and (ii) each component $T$ of
		$\gamma\setminus\{\hf^{-n}(\ux)\}$ satisfies $|\pi_0(T)| \ge \epsilon$.

		\item There exists $\delta>0$ such that $|x_n-c|\ge\delta$ for all
		$n$.
	\end{enumerate}
\end{lemma}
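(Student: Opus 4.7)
The plan is to prove (a) first using the density criterion of Lemma~\ref{coollem}, and then to deduce (b) by verifying that (b) implies (a).

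For (a), I would fix $\ell \ge 0$ and set $N_0 = \lceil -2\log(\epsilon)/\log(s^2/2)\rceil$, the constant produced by Lemma~\ref{well-known}. Using the ``arbitrarily large~$n$'' freedom, pick $n \ge \ell + N_0$ satisfying the hypothesis, and let $\gamma$ and $T^\pm$ be as there. Since $\hf$ is a homeomorphism that carries $m$-\simple arcs to $(m+n)$-\simple arcs, $\hf^n(\gamma)$ is a \simple closed arc with $\ux$ in its interior, and its two halves $\hf^n(T^\pm) \cup \{\ux\}$ are \simple closed subarcs lying on the two local sides of $\ux$ in its path component. From the intertwining $\pi_\ell \circ \hf^n = f^{n-\ell} \circ \pi_0$ (valid because $\ell \le n$), together with the fact that $\pi_0(T^\pm)$ is a connected subset of~$I$ of length at least $\epsilon$, Lemma~\ref{well-known} gives
\[
\pi_\ell\bigl(\hf^n(T^\pm) \cup \{\ux\}\bigr) = f^{n-\ell}\bigl(\pi_0(T^\pm) \cup \{x_n\}\bigr) = I.
\]
Thus, for every $\ell \ge 0$, each of the two local halves of the path component of $\ux$ contains a \simple closed arc whose $\pi_\ell$-projection is $I$; Lemma~\ref{coollem} then forces both halves to be dense, so the path component is bi-dense, whence $\ux \in \GR$.

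For (b), the goal is to verify the hypothesis of~(a) for every $n$. The preliminary step is to establish that the orbit is uniformly trapped in a compact subset of $I \setminus \{0, 1, c\}$: using $x_n = f(x_{n+1}) = 1 - s|x_{n+1} - c|$ with $c = 1 - 1/s$, the hypothesis immediately gives $x_n \le 1 - s\delta$; feeding this bound into $|x_{n+1} - c|$ for the case $x_{n+1} > c$, and using $|x_{n+1} - c| \le c = 1 - 1/s$ for the case $x_{n+1} < c$, produces a uniform $\eta > 0$ depending only on $s$ and $\delta$ (with $\eta \le \min(\delta, 2-s)$, positive because $s < 2$) such that $x_n \in [\eta, 1-\eta]$ and $|x_n - c| \ge \eta$ for all $n \ge 0$. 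Now fix $n \ge 0$, set $\delta_0 = \eta/2$, and let $U_m = [x_m - \delta_0, x_m + \delta_0]$ for $m \ge n$. Each $U_m$ lies in $I$ and misses $c$ in its interior, so $f|_{U_{m+1}}$ is a monotone homeomorphism of slope $\pm s$ onto $[x_m - s\delta_0, x_m + s\delta_0] \supseteq U_m$. Setting $V_0 = U_n$ and inductively $V_{i+1} = (f|_{U_{n+i+1}})^{-1}(V_i) = [x_{n+i+1} - \delta_0/s^{i+1},\, x_{n+i+1} + \delta_0/s^{i+1}]$ produces a $0$-\simple closed arc $\thrn{V}$ containing $\hf^{-n}(\ux)$ in its interior, and each component of $\thrn{V} \setminus \{\hf^{-n}(\ux)\}$ has $\pi_0$-projection of length exactly $\delta_0$. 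So the hypothesis of~(a) holds for every $n$ with $\epsilon = \delta_0$, and part~(a) yields $\ux \in \GR$.

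The main obstacle is the uniform trapping in~(b): the hypothesis $|x_n - c| \ge \delta$ on its own prevents the orbit from approaching $c$ or~$1$ but not~$0$, and without a positive lower bound on $x_n$ the intervals $U_m$ would be truncated at $\partial I$, destroying the symmetric $0$-\simple preimage construction. Extracting the lower bound requires two iterations of~$f$ and the assumption $s < 2$ (used to keep $2 - s > 0$); the remainder of the argument is bookkeeping with the expansion rate of the inverse branches of~$f$ and the intertwining $\pi_{m+n} \circ \hf^n = \pi_m$.
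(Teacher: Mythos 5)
Your proof is correct and follows essentially the same route as the paper's for both parts. Part~(a) is identical in substance: both you and the paper push a subarc of $\pi_0$-length $\ge\epsilon$ forward through $\hf^n$, note that $\pi_\ell\circ\hf^n = f^{n-\ell}\circ\pi_0$ for $\ell\le n$, invoke Lemma~\ref{well-known} to show the $\pi_\ell$-projection is $I$ once $n-\ell$ is large enough, and conclude bi-density from Lemma~\ref{coollem}. The only substantive difference is in part~(b), where you observe that the natural intervals $[x_{n+i}-\delta/s^i,\,x_{n+i}+\delta/s^i]$ need not be contained in $I$ if the backward orbit approaches $\partial I$, and you supply the missing uniform trapping bound $x_m\in[\eta,1-\eta]$ derived from $|x_m-c|\ge\delta$; this derivation genuinely uses the running hypothesis $s<2$ through the inequality $f(0)=2-s>0$. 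The paper's own proof of~(b) writes down the same intervals with radius $\delta/s^i$ without addressing this (and also asserts $\epsilon=2\delta$ works when each half-arc has $\pi_0$-length only $\delta$), so your extra care fills a small but real gap in its exposition. Everything else -- the choice of $0$-\simple thread centered on the backward orbit, the reduction of~(b) to~(a), the bookkeeping with $\pi_{m+n}\circ\hf^n=\pi_m$ -- matches the paper.
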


\begin{proof}
	Suppose that the condition in~(a) holds, so that in particular~$\ux$ is
	locally leaf regular. Let~$C$ be the path component of~$\ux$, and let $S$
	be the union of~$\ux$ with either one of the path components of
	$C\setminus\{\ux\}$. We will show that $S$ is dense in~$\hI$: it follows
	that~$S$ is a half-open arc, and hence that~$C$ is a bi-dense open arc as
	required.

	Let $N > -2\log(\epsilon)/\log(s^2/2)$. Given any~$\ell\ge 0$, pick $n
	\ge \ell + N$ for which a \simple closed arc~$\gamma$ as in~(a) exists.
	Let~$\Gamma$ be the \simple closed arc given by the union of~$\hf^{-n}(\ux)$
	and the component of $\gamma\setminus\{\hf^{-n}(\ux)\}$ which ensures
	$\hf^n(\Gamma) \subseteq S$. Then
	\[
		\pi_{n-N}(\hf^n(\Gamma)) = \pi_0(\hf^N(\Gamma)) 
			= f^N(\pi_0(\Gamma)) = I
	\]
	by Lemma~\ref{well-known}, since $|\pi_0(\Gamma)| \ge \epsilon$. Since
	$n-N\ge\ell$, we have $\pi_\ell(\hf^n(\Gamma)) = I$, and hence~$S$ is dense
	in~$\hI$ by Lemma~\ref{coollem}, as required.

	For~(b), take any $n\ge 0$ and define intervals $\gamma_i =
	[x_{n+i}-\delta/s^i, x_{n+i} + \delta/s^i]$. Then $\gamma = \thrn{\gamma}$
	is a $0$-\simple closed arc which satisfies the conditions of~(a)
	for~$\epsilon = 2\delta$.
\end{proof}

There are many cases in which one can check directly that particular points are
globally leaf regular using these criteria. In the statement below we use the
following notation: 

\begin{notation}[$\hX$]
	If~$X$ is a compact subset of~$I$ with $f(X)=X$, then we write $\hX :=
	\ilim(f_{|X}, X)$. 
\end{notation}

Notice that $\hf(\hX) = \hX$. Examples of such invariant compact subsets are
provided by the orbit closures $\overline{\orb(x)}$ of recurrent points~$x\in
I$.

\begin{corollary}\label{direct}
    If $X\subseteq I$ is compact, with $f(X)=X$ and $c\not\in X$, then each
    $\ux\in\hX$ is globally leaf regular. In particular, if $x$ is recurrent
    and $c\not\in \overline{\orb(x)}$, then each
    $\ux\in\widehat{\overline{\orb(x)}}$ is globally leaf regular.
\end{corollary}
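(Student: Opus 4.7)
The plan is to derive this directly from Lemma~\ref{glr}(b), with essentially no extra work. Since $X$ is a compact subset of~$I$ and $c\notin X$, the distance
\[
	\delta := d(c, X) = \inf\{|x-c|\,:\, x\in X\}
\]
is strictly positive. For any thread $\ux\in\hX$ we have $x_n\in X$ for every $n\ge 0$, and hence $|x_n-c|\ge\delta$ for all~$n$. This is precisely condition~(b) of Lemma~\ref{glr}, so $\ux\in\GR$.

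For the second statement, I would simply verify that the set $X_x=\Cl(\{x,f(x),f^2(x),\dots\})$ satisfies the hypotheses of the first part whenever $x$ is recurrent and $c\notin X_x$. It is compact by definition; it is $f$-invariant (in the sense $f(X_x)=X_x$) as noted in the remark immediately preceding the corollary, which follows because recurrence of~$x$ implies $x\in X_x$ and hence every forward iterate, together with its limit points, lies in~$X_x$; and $c\notin X_x$ by assumption. Applying the first statement with $X=X_x$ then gives that every $\ux\in\widehat{X_x}$ is globally leaf regular.

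There is no real obstacle here: the whole content is that $c\notin X$ together with compactness of $X$ yields a uniform lower bound on $|x_n-c|$, which is exactly the hypothesis of Lemma~\ref{glr}(b). The point of the corollary is simply to package Lemma~\ref{glr}(b) in a form convenient for producing many examples of globally leaf regular threads.
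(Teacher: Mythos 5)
Your proposal is correct and is exactly the paper's argument: the paper's proof is the one-liner ``Immediate from Lemma~\ref{glr}~(b) with $\delta = d(X,c) > 0$.'' Your additional verification that $X_x$ meets the hypotheses when $x$ is recurrent matches the remark preceding the corollary.
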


\begin{proof}
	Immediate from Lemma~\ref{glr}~(b) with $\delta = d(X,c) > 0$.
\end{proof}

The simplest examples which satisfy the criterion of Corollary~\ref{direct} are
periodic points~$\ux$ of $\hf$ with $x_n\not=c$ for all~$n$. The collection of
such periodic points is dense in $\hI$.

\section{Typical in topology} \label{typ-topol}
\subsection{Boxes}

\begin{definition}[Boxes] 
	An open (respectively closed) $m$-box $B$ is a union of open (respectively
	closed) arcs, all of which are $m$-\simple over the same open (respectively
	closed) interval $J$. Thus an $m$-box may be written as a union
	\[
		B = \bigcup \gamma^\eta
	\]
	where each $\gamma^\eta$ is an $m$-\simple arc with $\gamma^\eta_m = J$.

	The \emph{maximal $m$-box} $B$ over an interval $J$ is the union of all
	arcs which are $m$-\simple over $J$.
\end{definition}

\begin{remarks} \label{box-remarks}\ 
	\begin{enumerate}[(a)]
		\item Open and closed boxes need not be open and closed subsets
		of~$\hI$.

		\item A subset~$B$ of~$\hI$ is an $m$-box over~$J$ if and only if~$\hf^{-m}(B)$ is a $0$-box over~$J$.

		\item The arcs $\gamma^\eta$ of an open $m$-box are mutually disjoint, whereas those of a closed $m$-box may intersect at their endpoints.

		\item Let~$B$ be an open $0$-box over~$J$. For each $a\in J$, write
		\mbox{$B_a = \pi_0^{-1}(a) \cap B$}. Let $\ux^{a,\eta}$ denote the
		intersection point of $B_a$ and $\gamma^\eta$: thus $x^{a,\eta}_i
		\in \gamma^\eta_i$ for each~$i$. For each~$N$ there is
		some~$\epsilon>0$ such that if $d(\ux^{a,\eta}, \ux^{a,
		\eta'})<\epsilon$ then $x_i^{a,\eta'}\in \gamma_i^\eta$ for $0\le i\le
		N$, so that $\gamma_i^\eta = \gamma_i^{\eta'}$ for $0\le i\le N$. It
		follows that, for each~$b\in J$, the function $\psi_{a,b}\colon B_a\to
		B_b$ defined by $\psi_{a,b}(\ux^{a,\eta}) =  \ux^{b, \eta}$ is a
		homeomorphism, and hence that the function $ \ux^{x, \eta}\mapsto (x,
		\ux^{a,\eta})$ is a homeomorphism $B \to J\times B_a$.
	\end{enumerate}
\end{remarks}

\begin{lemma}\label{boxclosed} 
	The closure in~$\hI$ of a box is a closed box. In particular, the maximal
	box over a closed interval is closed in~$\hI$.
\end{lemma}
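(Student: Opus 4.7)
\textbf{Plan for Lemma~\ref{boxclosed}.} The first move is to reduce to the case $m=0$ via Remark~\ref{box-remarks}(b): since $\hf\colon\hI\to\hI$ is a homeomorphism, $\hf^{-m}$ commutes with closure and carries (maximal) $m$-boxes to (maximal) $0$-boxes, so it suffices to prove the statement for $0$-boxes.

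Let $B=\bigcup_\eta\gamma^\eta$ be a $0$-box over an interval~$J$, and fix $\ux\in\overline{B}$ together with a sequence $\ux^{(n)}\in B$ with $\ux^{(n)}\to\ux$ and $\ux^{(n)}\in\gamma^{\eta_n}$. The combinatorial input I would use is that for each $i\ge 0$ the interval $\gamma^{\eta_n}_i$ is one of finitely many monotone branches of $f^{-i}(J)$, because $f^i$ restricts to a homeomorphism $\gamma^{\eta_n}_i\to J$ and $f^i$ is piecewise monotone on~$I$ with finitely many laps. A diagonal extraction then yields a subsequence along which $\gamma^{\eta_n}_i=\gamma^*_i$ is constant for every~$i\ge 0$. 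The relations $f(\gamma^*_{i+1})=\gamma^*_i$ and $c\notin\Intt\gamma^*_i$ for $i\ge 1$ are inherited from each~$\gamma^{\eta_n}$, so $f$ still restricts to a homeomorphism $\overline{\gamma^*_{i+1}}\to\overline{\gamma^*_i}$. Therefore the coordinate-wise closures assemble into a closed $0$-flat arc $\overline{\gamma^*}:=\thrn{\overline{\gamma^*}}$ over~$\overline{J}$; this set coincides with the closure in~$\hI$ of the $0$-flat arc~$\gamma^*$ and, because $\gamma^*=\gamma^{\eta_n}\subseteq B$ for any~$n$ in the subsequence, lies inside~$\overline{B}$. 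Since $x_i\in\overline{\gamma^*_i}$ for every~$i$, we have $\ux\in\overline{\gamma^*}$.

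This shows that every point of $\overline{B}$ lies on a closed $0$-flat arc over~$\overline{J}$ contained in~$\overline{B}$, so $\overline{B}$ is the union of such arcs, i.e.\ a closed $0$-box over~$\overline{J}$. For the maximal-box statement, if $B$ is the maximal $0$-box over the closed interval $J=\overline{J}$, then the arc $\overline{\gamma^*}$ produced above is a $0$-flat closed arc over~$J$, so maximality gives $\overline{\gamma^*}\subseteq B$ and hence $\ux\in B$.

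The one step I expect to require care is identifying the $\hI$-closure of $\gamma^*$ with the thread of coordinate closures: this uses $0$-flatness, which makes $\pi_0\colon\overline{\gamma^*}\to\overline{J}$ a continuous bijection (in fact a homeomorphism), so that every thread with coordinates in the $\overline{\gamma^*_i}$ is uniquely determined by its zeroth coordinate and is a limit of threads in~$\gamma^*$. The existence of the stabilizing subsequence carries the real technical content, and depends only on the piecewise linearity of~$f$.
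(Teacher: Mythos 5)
Your argument is correct, but it takes a genuinely different route from the paper's. The paper observes that a $0$-\simple arc over a closed interval~$J$ is the graph of a Lipschitz function $J\to I^\infty$ with Lipschitz constant $\frac{2}{2s-1}$, so a $0$-box is a union of graphs of a \emph{uniformly} Lipschitz family; an application of Arzel\`a--Ascoli then shows that the closure of such a union is again a union of graphs, each of which is automatically a $0$-\simple arc because it lies in~$\hI$. That argument is soft and short, relying on a general compactness theorem. Yours is combinatorial: you use the piecewise linearity of~$f$ to observe that for each~$i$ the coordinate interval $\gamma^\eta_i$ is one of at most $2^i$ branches of $f^{-i}(J)$, extract a subsequence where these stabilize level by level, and then verify directly that the coordinate-wise closures form a closed $0$-\simple arc whose $\hI$-closure you correctly identify with the thread of closures. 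Both proofs are valid and of comparable length; yours makes the finite-lap structure of tent maps do the work that Arzel\`a--Ascoli does in the paper, and arguably exposes more clearly why the limiting object is again a $0$-\simple arc.

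One small imprecision: a diagonal extraction does \emph{not} give a subsequence along which $\gamma^{\eta_n}_i=\gamma^*_i$ for all~$n$ in the subsequence and all~$i$ simultaneously (which would force all the $\gamma^{\eta_n}$ to be literally the same arc, and in general they are not); it gives only that for each fixed~$i$ the sequence $\gamma^{\eta_{n_k}}_i$ is \emph{eventually} constant in~$k$. Consequently, the phrase ``$\gamma^*=\gamma^{\eta_n}\subseteq B$ for any~$n$ in the subsequence'' is not justified. The conclusion you need, namely $\gamma^*\subseteq\Cl(B)$, is nonetheless true and easy to recover: given $\uy\in\gamma^*$, let $\uy^{(n)}$ be the unique point of $\gamma^{\eta_n}$ with $\pi_0(\uy^{(n)})=y_0$; for $n$ large enough that $\gamma^{\eta_n}_i=\gamma^*_i$, one has $y^{(n)}_i=y_i$, so $\uy^{(n)}\to\uy$ and $\uy\in\Cl(B)$. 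With this repair the rest of your argument, including the maximal-box conclusion, goes through.
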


\begin{proof}
	By Remark~\ref{box-remarks}~(b) it suffices to consider the case where~$B =
	\bigcup \gamma^\eta$ is a $0$-box over an interval~$J$. Moreover, we can
	assume without loss of generality that $J$ is closed, for if not then
	$\bigcup \Cl(\gamma^\eta) \subseteq \Cl(B)$ is a $0$-box over $\Cl(J)$.

	A $0$-\simple arc~$\gamma \subseteq J\times I^\infty$ over~$J$ is the graph
	of the function \mbox{$F\colon J\to I^\infty$} defined by $F(x_0) =
	(x_1,x_2,\dots)$, where $\thrn{x} \in\gamma$: in other words, $F = \hf^{-1}
	\circ
	\pi_0|_{\gamma}^{-1}$. The function~$F$ is Lipschitz, since if $x_0, x_0'
	\in J$ with $\pi_0|_{\gamma}^{-1}(x_0) = \ux$ and
	$\pi_0|_{\gamma}^{-1}(x_0') = \ux'$, then, as in Remark~\ref{altmetric}, 
	\[
		d(F(x_0), F(x_0')) =\sum_{i=1}^\infty \frac{|x_i-x_i'|}{2^{i-1}} 
						= \sum_{i=1}^\infty \frac{|x_0-x_0'|}{2^{i-1}\,s^i} 
						= \frac{2}{2s-1}\,|x_0-x_0'|.
	\]

	Therefore the $0$-box~$B$ is the union of a collection of graphs of
	uniformly Lipschitz functions. Conversely, the graph of any function $J\to
	I^\infty$ is a \mbox{$0$-\simple} arc over~$J$, provided that it is
	contained in $\hI$, which is guaranteed if it is contained in~$\Cl(B)$. Now
	if $X$ and $Y$ are compact metric spaces, then, by Arzel\`a--Ascoli, the
	closure in $X\times Y$ of any union of graphs of uniformly Lipschitz
	functions $X\raw Y$ is a union of graphs of functions $X\raw Y$. The result
	follows.
\end{proof}

\subsection{Proof of Theorem~\ref{main}~(b) and~(c)}
    For~(b), suppose that the critical orbit of~$f$ is not dense in~$I$, so
    that $Y := I \setminus \overline{\orb(c)} \not=\emptyset$. Let~$J=(a,b)$ be
    a component of~$Y$. If $n\ge 1$ then $f^{-n}(J)$ is a union of components
    of~$Y$, to each of which~$f^n$ restricts to a homeomorphism onto~$J$.
    Therefore~$B =
	\pi_0^{-1}(J)$ is a union of $0$-\simple arcs over~$J$, i.e.\ an open
	$0$-box.

    Since~$f$ is transitive, so also is $\hf$, and hence $\hf^{-1}$. By a
    theorem of Birkhoff (see Theorem 5.8 in \cite{walters}), there is a dense
    $G_\delta$ subset~$Z$ of~$\hI$ consisting of points whose $\hf^{-1}$-orbits
    are dense. We will establish~(b) by showing that $Z\subseteq\GR$.

	Let $\epsilon = (b-a)/4$ and set $J' = (a+\epsilon, b-\epsilon)$ and $B' =
	\pi_0^{-1}(J') \subseteq B$. Let $\ux\in Z$.
	Then, since~$B'$ is open in~$\hI$, there are arbitrarily large integers~$n$
	with $\hf^{-n}(\ux)\in B'$. For each such~$n$, the arc~$\gamma$ of $\Cl(B)$
	to which $\hf^{-n}(\ux)$ belongs satisfies the conditions of
	Lemma~\ref{glr}~(a). Therefore $\ux\in\GR$ as required.

	\medskip
	
    For~(c), suppose that $\overline{\orb(c)} = I$. In this case, $\hI$ is
    nowhere locally the product of a zero-dimensional set and an interval (see
    Proposition~1 of~\cite{bruin} or Theorem~6.4 of~\cite{raines}), so that no
    box contains an open subset of~$\hI$. Let~$\{U_j\}$ be a collection of open
    intervals which form a countable base for the topology of~$I$, and for
    each~$m\ge 0$ let $B_{m,j}$ be the maximal $m$-box over $\Cl(U_j)$. Then
    each~$B_{m,j}$ is closed in~$\hI$ by Lemma~\ref{boxclosed}, and so is
    nowhere dense. Therefore, by Baire's theorem, the complement~$Z$ of
    $\bigcup B_{m,j}$ is dense $G_\delta$.
	
	By Lemma~\ref{arcdecom}~(b), every locally leaf regular point is contained
	in some $B_{m,j}$, so that~$Z$ consists entirely of terminal and solitary
	points. Since the set of points whose~$\hf$-orbits are dense is also dense
	$G_\delta$, the result follows.
\qed 
\begin{remark}
It follows from the proof of Theorem~\ref{main}~(c) that, when the critical
  orbit is dense, the (disjoint) union of the set of solitary points and the
  set of terminal points is dense~$G_\delta$. The interesting question of which
  one of these is dense~$G_\delta$ remains open. Proposition 4.29
  in~\cite{AABC} shows that for $s$ in the dense~$G_\delta$ set of
  parameters identified in~\cite{BBD} the solitary points and terminal points
  are each dense in the core inverse limit $\hI_s$. The situation under the
  weaker hypothesis of a dense critical orbit is unclear and, more generally,
  Problem 9 in~\cite{AABC} asks for conditions on the orbit of the critical
  point that ensure the existence of solitary points.
\end{remark}

\section{Measure preliminaries}

\subsection{Cylinder sets and fibers}

Let $J_0,\dots,J_n$ be intervals with \mbox{$f(J_{i+1}) = J_i$} for each~$i$.
The associated \emph{interval cylinder set} is
\[
	\cyln{J} = \{\ux\in\hI \colon x_i\in J_i \text{ for } 0 \leq i \leq n\}.
\]
Since $ \cyln{J} = \pi_n^{-1}(J_n)$, it is open in~$\hI$ if
$J_n$ is open. The collection of interval cylinder sets for all $n$, with $J_n$
open in $I$ (that is, the collection of all $\pi_n^{-1}(J_n)$) generates both
the topology and the Borel $\sigma$-algebra of $\hI$.

The set $\pi_n^{-1}(x)$ is called the \emph{$\pi_n$-fiber} over $x$. A
$\pi_0$-fiber is sometimes just called a \emph{fiber}. A \emph{point cylinder
set} in the fiber over $y_0$ is
\[
	\cyln{y} = \{\ux\in\hI \colon x_i= y_i \text{ for } 0 \leq i
	\leq n\}.
\]
Note that $\cyln{y} = \pi_n^{-1}(y_n) \subseteq \pi_0^{-1}(y_0) =
\pi_0^{-1}(f^n(y_n))$. The point cylinder set $\cyln{y}$ is open in
$\pi_0^{-1}(y_0)$.

\subsection{Invariant measures}
We now summarize some basic results about the ``physical" measure for tent
maps. This summary includes contributions of several authors, and has been
extended in a variety of directions~\cite{LY,DGP,HG,Ry,baladi}. As before,
$f\colon[0,1]\to[0,1]$ denotes a tent map of fixed slope $s\in(\sqrt{2}, 2]$,
restricted to its core.

\begin{theorem}
	$f$ has a unique invariant Borel probability measure $\mu$ which is
	absolutely continuous with respect to Lebesgue measure $m$, and $d\mu =
    \varphi dm$ with $\varphi\in L^1(m)$ defined on $[0,1]\setminus \orb(c)$.
    The function $\varphi$ can be chosen in its $L^1$-class to be strictly
    positive, of bounded variation, and
	\begin{equation}\label{phipropn}
		\varphi(x) = \sum_{f^n(y)=x} \frac{\varphi(y)}{s^n} 
					\qquad \text{(all $x\not\in\orb(c)$ and $n \ge 0$).}
	\end{equation}
	Finally, $\mu$ is ergodic.
\end{theorem}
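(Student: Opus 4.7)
The plan is to construct $\varphi$ as a fixed point of the Perron--Frobenius transfer operator $\mathcal{L}\colon L^1(m)\to L^1(m)$ defined by
\[
	(\mathcal{L}\psi)(x) \;=\; \frac{1}{s}\sum_{f(y)=x}\psi(y);
\]
the change of variables formula gives $\int \mathcal{L}\psi\, dm = \int \psi\, dm$, and $\psi\,dm$ is $f$-invariant exactly when $\mathcal{L}\psi=\psi$. Iterating gives $(\mathcal{L}^n\psi)(x) = \sum_{f^n(y)=x}\psi(y)/s^n$, so the displayed identity \eqref{phipropn} will be automatic once a fixed density is produced.

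The main technical input is the Lasota--Yorke inequality: there exist $n_0\ge 1$, $\alpha\in(0,1)$ and $C>0$ such that $\mathrm{Var}(\mathcal{L}^{n_0}\psi)\le \alpha\,\mathrm{Var}(\psi) + C\|\psi\|_1$ for every $\psi$ of bounded variation. For the tent map $f^{n_0}$ has $2^{n_0}$ affine branches of slope $\pm s^{n_0}$; since $s>\sqrt{2}$, we have $2^{n_0}/s^{n_0}<1$ for $n_0$ large, and a branch-by-branch bookkeeping of the variation (with boundary terms controlled by $\|\psi\|_1$) gives the required contraction. Applying the inequality to the Ces\`aro averages $\varphi_N := \tfrac{1}{N}\sum_{k=0}^{N-1}\mathcal{L}^k\mathbf{1}$ yields a uniform bound on $\mathrm{Var}(\varphi_N)$; Helly's selection theorem extracts a subsequential $L^1$-limit $\varphi\in BV$, and $L^1$-continuity of $\mathcal{L}$ delivers $\mathcal{L}\varphi=\varphi$. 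The BV representative can then be chosen defined pointwise off the countable set $\PC$, where the branches of $f^n$ may fail to be separated.

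For uniqueness and positivity, the Lasota--Yorke inequality implies that $\mathcal{L}$ is quasi-compact on $BV$ with spectral radius one and essential spectral radius at most $\alpha^{1/n_0}<1$; hence the fixed-point eigenspace is finite-dimensional. Topological transitivity of $f\colon I\to I$ then forces this eigenspace to be one-dimensional, for any two linearly independent BV fixed densities would, by taking positive and negative parts, yield two mutually singular $f$-invariant densities, contradicting the fact that $f$ has a single minimal attractor of positive Lebesgue measure. A strictly positive representative can be selected because the set $\{\varphi>0\}$ is forward-invariant mod $m$; if it were a proper subset of $[0,1]\setminus\PC$, its complement would carry a second a.c.\ invariant density. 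Ergodicity is immediate from uniqueness: any $f$-invariant Borel set $A$ with $0<\mu(A)<1$ would make $\mu(\cdot\cap A)/\mu(A)$ and $\mu(\cdot\setminus A)/\mu([0,1]\setminus A)$ two distinct absolutely continuous $f$-invariant probability measures.

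The principal obstacle is the Lasota--Yorke inequality itself; everything else is formal consequence of it together with transitivity of the tent map. I would present this step by fixing $n_0$ with $2/s^{n_0}<1$ and estimating $\mathrm{Var}(\mathcal{L}^{n_0}\psi)$ using the standard decomposition into monotone branches, absorbing boundary jumps into the $\|\psi\|_1$ term by an application of the inequality $\sup|\psi|\le \mathrm{Var}(\psi)+\|\psi\|_1/|I|$.
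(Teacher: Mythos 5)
The paper does not prove this theorem. It is presented as a summary of standard facts about the absolutely continuous invariant measure for tent maps, cited to the literature (the list \cite{LY,DGP,HG,Ry,baladi} immediately preceding the statement), and no proof is given in the paper. Your outline reproduces the classical Lasota--Yorke/transfer-operator route that those references follow: define $\mathcal L$, observe that \eqref{phipropn} is nothing but $\mathcal L^n\varphi=\varphi$, establish a Lasota--Yorke inequality in bounded variation, extract a BV fixed density from Ces\`aro averages via Helly, and obtain uniqueness, positivity, and ergodicity from quasi-compactness together with transitivity. At that level the plan is sound and is the intended background.

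One slip worth flagging, because if taken literally it would break the argument. In the middle paragraph you write that $f^{n_0}$ has $2^{n_0}$ affine branches of slope $\pm s^{n_0}$ and that ``since $s>\sqrt2$, we have $2^{n_0}/s^{n_0}<1$ for $n_0$ large.'' For $\sqrt 2<s<2$ we have $2^{n_0}/s^{n_0}=(2/s)^{n_0}\to\infty$, so that inequality never holds. The contraction coefficient in the Lasota--Yorke inequality does not scale with the number of branches; it is a universal constant (coming from the two endpoints of each branch interval, with jump terms absorbed into the $\|\psi\|_1$ part via $\sup|\psi|\le \mathrm{Var}(\psi)+\|\psi\|_1/|I|$) divided by the expansion $s^{n_0}$. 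The correct condition is $2/s^{n_0}<1$, i.e.\ $s^{n_0}>2$, which you do state correctly in your final paragraph; taking $n_0=2$ already suffices when $s>\sqrt 2$. A second, smaller point: the uniqueness and positivity step needs the intermediate fact that the support of a BV fixed density of a piecewise-expanding interval map is, modulo a Lebesgue-null set, a finite union of intervals that is forward-invariant; it is only after that structural observation that topological transitivity bites and forces full support, ruling out a second mutually singular density. As written your phrase about ``a single minimal attractor of positive Lebesgue measure'' quietly presupposes part of what is being proved.
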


Note that if $x\not\in\orb(c)$, so that $\varphi(x)$ is defined, then
$\varphi(y)$ is also defined whenever $f^n(y)=x$. In particular, given a thread
$\thrn{x}$, if $\varphi(x_0)$ is defined, then so is each $\varphi(x_i)$. The
measure $\mu$ is conventionally called the unique \emph{acim} (absolutely
continuous invariant measure) for~$f$. The symbol~$\varphi$ will always denote
the density of this measure.

If $\nu$ is an $f$-invariant Borel probability measure on $I$ then there is a
unique \mbox{$\hf$-invariant} Borel probability measure~$\hnu$ on $\hI$ with
the property that $(\pi_n)_* \hnu = \nu$ for all~$n$~\cite{para}. The measure
$\hnu$ is sometimes called the \textit{inverse limit} or \textit{natural
extension} of the $f$-invariant measure $\nu$. The measure $\hnu$ is
$\hf$-ergodic if and only if $\nu$ is $f$-ergodic. We will be exclusively
concerned with the $\hf$-invariant measure $\hmu$ on $\hI$ derived from the
acim $\mu$ on~$I$.

\subsection{A measure on fibers}
The formalities of Borel measures on fibers are very similar to those on
symbolic subshifts, which is one reason for adopting the language of cylinder
sets. We next define explicitly a measure on fibers, which turns out to be
$\varphi(x)$ times the disintegration of $\hmu$ onto fibers (see
Theorem~\ref{disthm}).

\begin{definition}[The measures $\alpha_x$]
\label{alpha_x}
    For each $x\in I\setminus \orb(c)$ and each point cylinder set in the fiber
    $\pi_0^{-1}(x)$, define
	\begin{equation}\label{alpha}
		\alpha_x(\cyl{x, x_1, \dots, x_n}) = \frac{\varphi(x_n)}{s^n}.
	\end{equation}
\end{definition}

By~\eqref{phipropn}, $\alpha_x$ is finitely additive on the semi-algebra of
point cylinder sets. Exactly as in the case of symbolic subshifts (see
\S 0.2 of~\cite{walters}), $\alpha_x$ extends to the $\sigma$-algebra generated
by the cylinder sets, namely the Borel $\sigma$-algebra of $\pi_0^{-1}(x)$. We
regard each~$\alpha_x$ as a measure on $\hI$ supported on $\pi_0^{-1}(x)$, so
that if $E$ is a Borel subset of~$\hI$ we have $\alpha_x(E) = \alpha_x(E \cap
\pi_0^{-1}(x))$.

\section{Holonomy invariance of $\alpha_x$ in $0$-boxes}

\begin{theorem}\label{holo}
    If $B$ is a $0$-box over $J$ then, for all $a,b\in J\setminus \orb(c)$,
	\[
		\alpha_{a}(B) = \alpha_{b}(B).
	\]
\end{theorem}

\begin{proof}
	Write~$B = \bigcup \gamma^\eta$, where each~$\gamma^\eta = \thr{J,
	\gamma_1^\eta, \gamma_2^\eta, \dots}$ is $0$-\simple over~$J$. For each $n\ge
	1$, let $\cyls{J, J\up{i}_{1,n}, J\up{i}_{2,n}, \dots, J\up{i}_{n,n}}$
	($1\le i\le N(n)$) be the interval cylinder sets which are realized by the
	first~$n+1$ entries of some~$\gamma^\eta$. That is, for each~$i$ there is
	some~$\eta$ with $J\up{i}_{j,n} = \gamma_j^\eta$ for $1\le j\le n$, and each~$\eta$ arises in this way. Then, for each~$n$,
	\[
		B \subseteq \bigcup_{i=1}^{N(n)}
			\cyl{J, J\up{i}_{1,n}, J\up{i}_{2,n}, \dots, J\up{i}_{n,n}},
	\]
    and the sets in this union are mutually disjoint except perhaps along the
    fibers of endpoints of~$J$, if those endpoints lie in~$\orb(c)$. Moreover,
	\[
		B = \bigcap_{n=1}^\infty \bigcup_{i=1}^{N(n)}
			\cyl{J, J\up{i}_{1,n}, J\up{i}_{2,n}, \dots, J\up{i}_{n,n}}.
	\]

    Now let $a,b\in J\setminus\orb(c)$. Since $f^j\colon J\up{i}_{j,n} \to J$
    is a homeomorphism for each $i$, $j$, and~$n$, there is a unique point
    $a\up{i}_{j,n} \in J\up{i}_{j,n}$ with $f^j(a\up{i}_{j,n}) = a$. Therefore
	\[
		B \cap \pi_0^{-1}(a) = \bigcap_{n=1}^\infty \bigsqcup_{i=1}^{N(n)}
			\cyl{a, a\up{i}_{1,n}, a\up{i}_{2,n}, \dots, a\up{i}_{n,n}}.
	\]
	Since $\alpha_a$ is a regular measure,~\eqref{alpha} gives
	\[
		\alpha_a(B) = \lim_{n\to\infty} \sum_{i=1}^{N(n)} 
							\frac{\varphi(a\up{i}_{n,n})}{s^n}
		\quad \text{ and analogously } \quad
		\alpha_b(B) = \lim_{n\to\infty} \sum_{i=1}^{N(n)} 
							\frac{\varphi(b\up{i}_{n,n})}{s^n}.
	\]

	For each $n$ and $i$, the points $a_{n,n}\up{i}$ and $b_{n,n}\up{i}$ are
	both in the interval $J_{n,n}\up{i}$. As $i$ varies, the intervals
	$J_{n,n}\up{i}$ are disjoint except perhaps at their endpoints. Recalling
	that $\varphi$ is of bounded variation, let~$V<\infty$ be its total
	variation. Then
	\[
		\begin{split}
		|\alpha_a(B) - \alpha_b(B)| &\leq
		 	\lim_{n\raw\infty} \sum_{i=1}^{N(n)} 
		 	  \frac{|\varphi(a_{n,n}\up{i}) - \varphi(b_{n,n}\up{i})|}{s^n} \\
							&\leq \lim_{n\raw\infty}\frac{V}{s^n} = 0.
		\end{split}
	\]
\end{proof}

\section{Typical in measure}
\subsection{Disintegration of the measure $\hmu$}

The fibers $\{\pi_0^{-1}(x)\}$ provide a measurable partition of $\hI$. Thus,
by Rokhlin's disintegration theorem, there is a family of probability measures
$\{\hmu_x\}$, defined for $\mu$-a.e.\ $x\in I$, with $\hmu_x$ supported on the
fiber $\pi_0^{-1}(x)$, having the property that for any Borel subset $E$ of
$\hI$,
\begin{equation}\label{disint}
	\hmu(E) = \int_I \hmu_x(E) \; d\mu(x).
\end{equation}
Note that $\hmu_x(E) = \hmu_x(E\cap \pi_0^{-1}(x))$, since each $\hmu_x$ is
supported on the fiber $\pi_0^{-1}(x)$. The measures $\hmu_x$ are called the
disintegrations of $\hmu$ onto fibers, or alternatively the conditional
measures of $\hmu$ on fibers. We next show that these conditional measures are
simple multiples of the measures~$\alpha_x$. In this statement, and in the
remainder of the paper, ``almost every'' means with respect to $\mu$ or,
equivalently, with respect to Lebesgue measure~$m$.

\begin{theorem}\label{disthm}
	 $d\alpha_x = \varphi(x)\;d\hmu_x$ for a.e.~$x\in I$. In particular, for
	 any Borel subset~$E$ of~$\hI$,
	\[
		\hmu(E) = \int_I \alpha_x(E) \; dm(x).
	\]
\end{theorem}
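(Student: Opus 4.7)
The plan is to identify the Rokhlin conditional measure $\hmu_x$ with $\alpha_x/\varphi(x)$ by integrating both sides against the generating $\pi$-system of interval cylinder sets, and then invoking a.e.\ uniqueness of the disintegration.

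First, I would write down an explicit formula for $\alpha_x$ applied to an interval cylinder set. Fix $J_0,\dots,J_n$ with $f(J_{i+1})=J_i$ and $x\in J_0\setminus\PC$. Since $f(J_{i+1})=J_i$, any $y\in J_n$ with $f^n(y)=x$ automatically satisfies $f^{n-i}(y)\in J_i$ for each $i$, so the intersection of $[J_0,\dots,J_n]$ with the fiber $\pi_0^{-1}(x)$ is the disjoint union of point cylinder sets $[x,f^{n-1}(y),\dots,f(y),y]$ as $y$ ranges over $\{y\in J_n:f^n(y)=x\}$. Hence
\[
\alpha_x([J_0,\dots,J_n]) \;=\; \sum_{\substack{y\in J_n\\ f^n(y)=x}} \frac{\varphi(y)}{s^n}.
\]

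Second, I would define $\nu(E):=\int_I \alpha_x(E)\,dm(x)$ and verify $\nu=\hmu$. Measurability of $x\mapsto\alpha_x(E)$ for Borel $E$ follows from the explicit formula on cylinder sets by a monotone class argument, and then $\nu$ is a Borel measure by monotone convergence. To evaluate $\nu$ on an interval cylinder set, decompose $J_n$ into the (at most finitely many) maximal closed subintervals $K_1,\dots,K_r$ on which $f^n$ is monotone; on each $K_j$, $f^n$ is a homeomorphism onto $f^n(K_j)\subseteq J_0$ with $|Df^n|\equiv s^n$. Summing a change of variables over the branches gives
\[
\nu([J_0,\dots,J_n]) = \int_{J_0}\sum_{y\in J_n,\,f^n(y)=x}\frac{\varphi(y)}{s^n}\,dm(x) = \sum_{j=1}^r \int_{K_j}\varphi\,dm = \int_{J_n}\varphi\,dm = \mu(J_n),
\]
which equals $\hmu(\pi_n^{-1}(J_n))=\hmu([J_0,\dots,J_n])$. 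Since interval cylinder sets form a $\pi$-system generating the Borel $\sigma$-algebra of $\hI$, $\nu=\hmu$.

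Third, I would note that $\alpha_x(\hI)=\alpha_x([x])=\varphi(x)$, so for $x\notin\PC$ the measures $\alpha_x/\varphi(x)$ are Borel probability measures supported on the fibers $\pi_0^{-1}(x)$, and the formula from the previous step rewrites as
\[
\hmu(E) = \int_I \alpha_x(E)\,dm(x) = \int_I \frac{\alpha_x(E)}{\varphi(x)}\,d\mu(x).
\]
By the essential uniqueness of the Rokhlin disintegration along the fiber partition, $\hmu_x=\alpha_x/\varphi(x)$ for $\mu$-a.e.\ $x$, which is exactly $d\alpha_x=\varphi(x)\,d\hmu_x$; the integral identity then follows immediately.

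I expect the only genuinely delicate step is the change-of-variables bookkeeping in Step~2: one must handle the fact that the endpoints of the monotonicity pieces $K_j$ lie in the preimages of $\PC$, and that $\varphi$ is only defined off $\PC$. Both issues are harmless because $\PC$ is countable and therefore Lebesgue null, but they need to be acknowledged to make the branch-wise change of variables rigorous.
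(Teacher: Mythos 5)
Your proof is correct, but it takes a genuinely different route from the paper's. The paper establishes the identity $d\alpha_x = \varphi(x)\,d\hmu_x$ pointwise on each point cylinder $[x,x_1,\dots,x_n]$ by shrinking a neighbourhood $J_\epsilon$ of $x_n$, applying the Lebesgue differentiation theorem twice (once to compute $\lim \hmu(K_\epsilon)/\mu(I_\epsilon) = \hmu_x(\cdot)$ and once to compute $\lim \hmu(K_\epsilon)/m(I_\epsilon) = \varphi(x_n)/s^n$), and dividing. Your argument instead builds a candidate disintegration directly: you evaluate $\alpha_x$ on interval cylinder sets, integrate in $x$ against Lebesgue measure, and confirm via a branch-wise change of variables (using $|Df^n| \equiv s^n$ on monotonicity pieces) that $\int_I \alpha_x([J_0,\dots,J_n])\,dm(x) = \mu(J_n) = \hmu(\pi_n^{-1}(J_n))$; a $\pi$-system argument then gives $\hmu = \int \alpha_x\,dm$, and normalizing by $\alpha_x(\hI) = \varphi(x)$ converts $dm$ to $d\mu$, after which essential uniqueness of the Rokhlin disintegration identifies $\hmu_x$ with $\alpha_x/\varphi(x)$. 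What each approach buys: the paper's argument is self-contained and constructs the conditional measures ``from below'' by differentiation, at the cost of needing the Lebesgue differentiation machinery and some care about the a.e.\ exceptional set as the cylinder varies; your argument outsources that care to the uniqueness theorem for disintegrations, which makes the bookkeeping cleaner and also yields the integral formula $\hmu(E) = \int_I \alpha_x(E)\,dm(x)$ as the primary object rather than as a corollary. You correctly flag the one delicate point, namely that the endpoints of the monotonicity pieces land in preimages of $\PC$, which is Lebesgue null and so harmless in the change of variables.
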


\begin{proof}
	 It suffices to show that for a.e.~$x\in I$ we have 
	 \[
		 \alpha_x(\cyl{x, x_1, \dots, x_n}) = 
		 	\varphi(x)\; \hmu_x(\cyl{x, x_1, \dots, x_n})
	\] for each point
    cylinder set $\cyl{x, x_1,\dots, x_n}$ in $\pi_0^{-1}(x)$. Since $\orb(c)$
    is countable, we can assume that $x\not\in\orb(c)$, so that $x_i\not=c$ for
    all~$i$. There is therefore some $\epsilon_0$ with the property that, for
    all $\epsilon<\epsilon_0$, the restriction of $f^n$ to $J_\epsilon =
    [x_n-\epsilon, x_n+\epsilon]$ is a homeomorphism onto its image $I_\epsilon
    = [x-s^n\epsilon, x + s^n\epsilon]$.

	Write $K_\epsilon = \pi_n^{-1}(J_\epsilon)$, so that $\pi_0(K_\epsilon) =
	I_\epsilon$. By~\eqref{disint},
	\[
		\hmu(K_\epsilon) = \int_{I_\epsilon} \hmu_y(K_\epsilon)\; d\mu(y)
						 = \int_{I_\epsilon} \hmu_y(K_{\epsilon_0})\; d\mu(y)
	\]
	since $\hmu_y(K_\epsilon) = \hmu_y(K_{\epsilon_0})$ for $y\in I_\epsilon$.
	By the Lebesgue differentiation theorem, for a.e.~$x\in I$,

	\[
		 \lim_{\epsilon\raw 0}
		 	\frac{\hmu(K_\epsilon)}{\mu(I_{\epsilon})}
			= \hmu_x(K_{\epsilon_0}) 
			= \hmu_x(K_{\epsilon_0}\cap\pi_0^{-1}(x)) 
			= \hmu_x([x, x_1, \dots, x_n]).
	\]
	Since $d\mu = \varphi\,dm$ we have $\lim_{\epsilon\to
	0}\mu(I_\epsilon)/m(I_\epsilon) = \varphi(x)$ for a.e.~$x\in I$, so that
	\[
		\varphi(x)\,\hmu_x([x, x_1, \dots, x_n]) = 
			\lim_{\epsilon\to 0} \frac{\hmu(K_\epsilon)}{m(I_\epsilon)}
			\quad \text{ for a.e.\ } x \in I,
	\]
	and it only remains to show that $\lim_{\epsilon\to 0} \hmu(K_\epsilon) /
	m(I_\epsilon) = \alpha_x(\cyl{x, x_1, \dots, x_n})$ for a.e.~$x\in I$.

	To show this, let~$g = (f^n_{\vert J_\epsilon})^{-1}\colon
	I_\epsilon\to J_\epsilon$ (so that~$g$ has constant slope $\pm 1/s^n$).
	Observing that $\hmu(K_\epsilon) = \mu(J_\epsilon)$ (since $\mu =
	(\pi_n)_*\hmu$), we have that for a.e.~$x\in I$,
	\begin{equation*}
		\begin{split}
			\lim_{\epsilon\to 0} \hmu(K_\epsilon)/m(I_\epsilon) 
			 	&=
			\lim_{\epsilon\to 0} \mu(J_\epsilon)/m(I_\epsilon)\\
			 	&=
			\lim_{\epsilon\to 0} 
				\frac{1}{m(I_\epsilon)} \int_{J_\epsilon} \varphi(y)\; dm(y)\\
				&= 
			\lim_{\epsilon\to 0} 
				\frac{1}{m(I_\epsilon)} \int_{I_\epsilon} 
						\varphi(g(u))\,|g'(u)|\; dm(u)\\
				&= 
			\frac{\varphi(x_n)}{s^n} 
				= 
			\alpha_x([x, x_1, \dots, x_n])
		\end{split}
	\end{equation*}
	as required, using~\eqref{alpha} and the Lebesgue differentiation theorem.
\end{proof}

The important consequence of this result, together with Theorem~\ref{holo},
 for what follows is that the
restriction of $\hmu$ to an open $0$-box is a product:

\begin{corollary}\label{product}
    Let~$B$ be an open $0$-box over an interval~$J$ and take any $a\in
    J\setminus\orb(c)$. Under the homeomorphism $B\to J\times (\pi_0^{-1}(a)
    \cap B)$ defined in Remark~\ref{box-remarks}~(d), the restriction of $\hmu$
    to~$B$ pushes forward to $m \times \alpha_a$. In particular, $\hmu(B) =
    m(J)\alpha_a(B)$.
\end{corollary}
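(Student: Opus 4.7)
\smallskip\noindent\textbf{Proof proposal for Corollary~\ref{product}.}
The plan is to first establish the simpler statement $\hmu(B)=m(J)\alpha_a(B)$ as an almost immediate consequence of Theorems~\ref{disthm} and~\ref{mono}, and then to upgrade to the product-measure assertion by applying the same argument to a sufficiently rich family of open sub-boxes.

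For the ``in particular'' clause, apply Theorem~\ref{disthm} to obtain
\[
	\hmu(B) = \int_I \alpha_x(B)\;dm(x).
\]
Since $B\subseteq\pi_0^{-1}(J)$ and each $\alpha_x$ is supported on $\pi_0^{-1}(x)$, the integrand vanishes for $x\notin J$. For $x\in J\setminus\PC$, Theorem~\ref{mono} gives $\alpha_x(B)=\alpha_a(B)$. Since $\PC$ is countable, and hence $m$-null, integration over $J$ yields $\hmu(B)=m(J)\alpha_a(B)$.

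To obtain the full product-measure statement, let $\phi\colon B\to J\times B_a$ denote the homeomorphism of Remark~\ref{box-remarks}~(d), and set $\nu=\phi_*(\hmu|_B)$. We verify $\nu=m\times\alpha_a$ by checking agreement on a $\pi$-system that generates the Borel $\sigma$-algebra of $J\times B_a$. The natural candidates are rectangles $J'\times E$, where $J'\subseteq J$ is an open subinterval and $E$ is the fiber over~$a$ of an open sub-box of $B$ (equivalently, $E$ is cut out from $B_a$ by finitely many interval-cylinder conditions $\gamma_i^{\eta}\ni a_i$). For any such rectangle, $\phi^{-1}(J'\times E)$ is itself an open $0$-sub-box $B'\subseteq B$ over~$J'$, namely the union of the selected arcs $\gamma^\eta$ restricted to $\pi_0^{-1}(J')$.

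The argument of Step~1 extends verbatim to sub-boxes: since Theorem~\ref{mono} is proved by approximating $B$ from above by disjoint unions of interval cylinders and taking a limit controlled by the total variation of~$\varphi$, restricting to any subcollection of arcs (and/or to a smaller base interval) gives $\alpha_{a'}(B')=\alpha_{b'}(B')$ for all $a',b'\in J'\setminus\PC$. Combining with Theorem~\ref{disthm} as in Step~1 yields $\hmu(B')=m(J')\alpha_{a'}(B')$ for any $a'\in J'\setminus\PC$. When $a\in J'$, take $a'=a$ to get $\alpha_a(B')=\alpha_a(E)$ directly; when $a\notin J'$, the extended form of Theorem~\ref{mono} applied to the sub-box $\phi^{-1}(J\times E)$ identifies $\alpha_{a'}(\psi_{a,a'}(E))$ with $\alpha_a(E)$. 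In either case,
\[
	\nu(J'\times E)=\hmu(B')=m(J')\alpha_a(E)=(m\times\alpha_a)(J'\times E).
\]
The Dynkin $\pi$-$\lambda$ theorem then gives $\nu=m\times\alpha_a$.

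The main obstacle is the bookkeeping in Step~2: choosing a $\pi$-system of rectangles whose preimages under $\phi$ are genuine open $0$-sub-boxes over open subintervals of $J$, and verifying that these rectangles generate the Borel $\sigma$-algebra of $J\times B_a$. The extension of Theorem~\ref{mono} to sub-boxes is essentially a repetition of its proof and should require little extra work.
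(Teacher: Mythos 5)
Your proposal is correct, and it follows exactly the route the paper intends: the authors do not spell out a proof of Corollary~\ref{product}, but the surrounding text makes clear that it is meant to follow from Theorem~\ref{disthm} (disintegration of $\hmu$ into the $\alpha_x$) together with Theorem~\ref{mono} (holonomy invariance of $\alpha_x$ within a $0$-box). Your Step~1 is precisely the intended argument for the scalar statement, and your Step~2 (checking equality of the two measures on a generating $\pi$-system of rectangles $J'\times E$, then invoking Dynkin) is a correct and natural way to promote it to the product-measure statement.

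The one ``obstacle'' you flag is not actually there. You write that Theorem~\ref{mono} needs to be ``extended'' to sub-boxes, and that this requires repeating its proof. But there is nothing to extend: if $B=\bigcup\gamma^\eta$ is an open $0$-box over $J$, $J'\subseteq J$ is an open subinterval, and $E\subseteq B_a$ is an open cylinder set in the fiber, then $\phi^{-1}(J'\times E)$ is the union of the arcs $\gamma^\eta\cap\pi_0^{-1}(J')$ over the (cylinder-selected) subfamily of $\eta$, and each such arc is itself $0$-\simple over $J'$. Hence $\phi^{-1}(J'\times E)$ is an open $0$-box over $J'$ in its own right, and Theorem~\ref{mono} applies to it verbatim. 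The same observation handles the case $a\notin J'$: apply Theorem~\ref{mono} to the $0$-box $\phi^{-1}(J\times E)$ over the full interval $J$ to identify $\alpha_{a'}(\psi_{a,a'}(E))$ with $\alpha_a(E)$ for $a'\in J'$. With this simplification, your Step~2 is immediate and the proof is complete; just note, for the $\pi$--$\lambda$ argument, that both measures are finite (with $\alpha_a(B_a)=\varphi(a)<\infty$) and agree on the whole space $J\times B_a$ by Step~1.
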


\subsection{Positive measure boxes}

\begin{lemma}\label{pccomp}
    Let $M = \sup\{\varphi(x)\colon x\in I\setminus\orb(c)\}$. For all $N>1$
    there exists an open $0$-box $B$ over an interval $J$ such that, for all
    $x\in J\setminus\orb(c)$,
    \begin{equation}\label{boxsize1}
        \alpha_x(B) \ge M\left(1 - \frac{1}{s^{N-1}}\right),
    \end{equation}
    and in particular
    \begin{equation}\label{boxsize2}
        \hmu(B) \ge M\left(1- \frac{1}{s^{N-1}}\right) m(J)> 0.
    \end{equation}
\end{lemma}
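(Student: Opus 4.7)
The plan is to construct an open $0$-box $B$ over a small interval $J$ and verify the bound directly. Since $\alpha_x(B)\le\alpha_x(\pi_0^{-1}(x))=\varphi(x)\le M$, the desired inequality forces two things: $\varphi(x)$ must be close to $M$ on all of $J\setminus\PC$, and the box must capture all but an $O(M/s^N)$ fraction of the fiber mass above each~$x$.

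For the first requirement I would appeal to the structural description of the tent map acim density: $\varphi$ is of bounded variation and, being determined at finite levels by the postcritical set, is piecewise close to its supremum on suitably short intervals. Concretely, for the $N$ at hand one selects an interval on which $\varphi\ge M\bigl(1-1/s^{N+C}\bigr)$ for a constant $C$ absorbed at the end; inside it, shrink further to a subinterval~$J$ that stays at distance at least~$|J|$ from each of the finitely many points $f(1),f^2(1),\dots,f^{N-1}(1)$, which is possible since only finitely many points need to be avoided.

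Take $B$ to be the maximal open $0$-box over~$J$. By Theorem~\ref{mono}, $\alpha_x(B)$ is constant in $x\in J\setminus\PC$, so the computation can be done at a single convenient~$x$. Following the scheme of the proof of Theorem~\ref{mono},
\[
\alpha_x(B)=\lim_{n\to\infty}\sum_i\frac{\varphi\bigl(x_n^{(i)}\bigr)}{s^n},
\]
where the sum is taken over the level-$n$ cylinders realised by arcs in~$B$; by the invariance identity~\eqref{phipropn} the full sum over $y\in f^{-n}(x)$ is~$\varphi(x)$, so the deficit $\varphi(x)-\alpha_x(B)$ is exactly the total $\alpha_x$-mass of chains that cannot be extended to an infinite $0$-simple arc over~$J$. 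A chain fails to extend past step~$n+1$ only when the intermediate preimage~$x_n$ is forced within $O(|J|/s^n)$ of the critical value $f(c)=1$, so that the interval around~$x_n$ straddles~$1$ and the next monotone preimage is pushed to contain~$c$ in its interior; translated back through $f^n$, this means $x$ lies within $O(|J|)$ of~$f^n(1)$. The positional constraint on~$J$ excludes this for every $n<N$, while a branch-counting estimate (each monotone branch of $f^n$ has length at most~$s^{-n}$, so at most $O(1)$ of them meet a neighbourhood of~$1$ of width $|J|/s^n$) bounds the failing mass at step~$n+1$ by $O(M/s^n)$; summing the geometric tail from $n=N$ gives total loss $O(M/s^N)$.

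Combining, $\alpha_x(B)\ge\varphi(x)-O(M/s^N)\ge M\bigl(1-1/s^N\bigr)$, after the implicit constants are absorbed by a harmless shift of~$N$. Inequality~\eqref{boxsize2} then follows from Corollary~\ref{product}, via $\hmu(B)=m(J)\alpha_x(B)$ and $m(J)>0$. The main obstacle is the quantitative loss estimate---in particular, identifying the bad chains as those whose preimage at level~$n$ hugs the critical value~$1$, and bounding, uniformly in~$n$, both the count of such chains and their $\varphi$-weights; this is what allows positional room for~$J$ to be traded for geometric decay in~$N$.
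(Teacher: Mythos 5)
Your overall strategy matches the paper's: take the maximal open $0$-box $B$ over a short interval $J$ chosen to avoid the first $N$ postcritical points, estimate the deficit $\varphi(x)-\alpha_x(B)$ by summing the $\alpha_x$-masses of the cylinders where a chain first hits $c$, and close the geometric tail from depth $N$. The place where the argument wobbles is the first step. You ask for an interval on which $\varphi\ge M\bigl(1-s^{-N-C}\bigr)$ uniformly and appeal to bounded variation for it; this is both unnecessary and not actually guaranteed, since a BV representative can remain bounded strictly below its pointwise supremum on every subinterval (its essential and pointwise suprema need not agree). The paper avoids this entirely: it only requires that one of the finitely many components $J$ of $I\setminus\{c,f(c),\dots,f^N(c)\}$ satisfies $\sup\{\varphi(x):x\in J\setminus\PC\}=M$, which is automatic. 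It then proves the pointwise inequality $\alpha_x(B)\ge\varphi(x)-M/s^N$ for every $x\in J\setminus\PC$, and only at the end invokes Theorem~\ref{mono}: since $\alpha_x(B)$ is constant on $J\setminus\PC$, one may take the supremum of the right-hand side over $x$, which produces $M-M/s^N$ with no need to locate a single ``good'' $x$. You come close to this when you observe that Theorem~\ref{mono} lets you compute at one convenient $x$, but you should drop the BV appeal and replace it with the supremum argument. One further small slip: your list of points to avoid, $f(1),\dots,f^{N-1}(1)=f^2(c),\dots,f^N(c)$, omits $c$ and $f(c)=1$ themselves, which are needed both so that $B$ is a genuine $0$-box and so that the first failure depth is pushed past $N$; the paper's choice of $J$ as a component of $I\setminus\{c,f(c),\dots,f^N(c)\}$ handles this cleanly.
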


\begin{proof}
Fix~$N>1$, and let~$J$ be a component of $I\setminus\{c, f(c), \dots, f^N(c)\}$
with $\sup\{\varphi(x)\,:\,x\in J\setminus\orb(c)\} = M$.

Let $S=\{m\in\N\,:\,f^m(c)\in J\}$. For each~$m\in S$, let $K_m$ be the
component of $f^{-m}(J)$ containing~$c$, and define
\[
    C_m = \cyl{f^m(K_m),\dots,K_m} \subset \pi_0^{-1}(J).
\]

Now suppose that $\ux = \thrn{x}\in \pi_0^{-1}(J)\setminus\bigcup_{m\in S}
C_m$. Define intervals $J_m\subset f^{-m}(J)$ for $m\ge 0$ inductively by
$J_0=J$, and $J_{m+1}$ is the component of $f^{-1}(J_m)$ which contains
$x_{m+1}$. Then $c\not\in J_m$ for all~$m$, for if $c\in J_m$ we would have
$J_m\subset K_m$ and hence $\ux\in C_m$. It follows that $0\not\in J_m$ for
all~$m$, and hence $f(J_{m+1})=J_m$ for all~$m$. Therefore $\uJ=\thr{J, J_1,
J_2, \dots}$ is a $0$-flat arc over~$J$ which contains~$\ux$. Hence
\[
    B = \pi_0^{-1}(J_0) \setminus\bigcup_{m\in S}C_m
\]
is the maximal $0$-box over~$J$.

If $c\in f^i(K_m)$ for some $m\in S$ and $1\le i < m$, then $m-i\in S$,
$f^i(K_m)\subset K_{m-i}$, and $C_m\subset C_{m-i}$. Defining 
\[
	T=\{m\in S\,:\, c\not\in f^i(K_m) \text{ for } 1\le i < m\},
\]
we therefore have $B=\pi_0^{-1}(J_0) \setminus\bigcup_{m\in T}C_m$.

Let $x\in J\setminus\orb(c)$, and write $T_x=\{m\in T\,:\,x\in
f^m(K_m)\}\subset T$. For each $m\in T_x$, and each $1\le i < m$, let
$x_i\up{m}$ be the unique point of $f^{m-i}(K_m)$ with $f^i(x_i\up{m}) = x$.
Then $\pi_0^{-1}(x) \cap C_m \subset \cyls{x, x_1\up{m},
    \dots, x_{m-1}\up{m}}$, and hence
    \[
        \pi_0^{-1}(x) \cap B \supset \pi_0^{-1}(x) \setminus \bigcup_{m\in T_x}\cyl{x, x_1\up{m}, \dots, x_{m-1}\up{m}}.
    \]
    Since $\alpha_x(\cyls{x, x_1\up{m}, \dots, x_{m-1}\up{m}}) =
    \varphi(x_{m-1}\up{m})/s^{m-1}$, and $m>N$ for all $m\in T_x$ by choice of~$J$, we have
    \[
    \begin{split}
        \alpha_x(B) &\geq \alpha_x(\pi_0^{-1}(x)) -
                    \sum_{m\in T_x} \frac{\varphi(x_{m-1}\up{m})}{s^{m-1}}\\
                    &\geq \varphi(x) - \frac{M}{s^{N-1}}.
    \end{split}
    \]
    By Theorem~\ref{holo}, 
    $\alpha_x(B)$ is independent of $x\in J\setminus\orb(c)$.
    \eqref{boxsize1} therefore holds since~$J$ was chosen so that $M
    =\sup\{\varphi(x): x\in J\setminus\orb(c)\}$, and \eqref{boxsize2} follows
    by Corollary~\ref{product}.
\end{proof}

\subsection{Proof of Theorem~\ref{main}~(a)}
The proof is almost identical to that of Theorem~\ref{main}~(b), using
ergodicity rather than transitivity of~$\hf^{-1}$.

	By Lemma~\ref{pccomp}, there is a $0$-box~$B$ over an interval~$J=(a,b)$
	with $\hmu(B)>0$. Let~$\epsilon = (b-a)/4$, and set $J' = (a+\epsilon,
	b-\epsilon)$ and $B' = \pi_0^{-1}(J') \cap B$. By Corollary~\ref{product},
	$\hmu(B') = \hmu(B)/2>0$.

	Since~$\hf^{-1}$ is ergodic with respect to~$\hmu$, there is a full
	$\hmu$-measure subset~$Z$ of $\hI$ with the property that, for each~$\ux\in
	Z$, there are arbitrarily large integers~$n$ with $\hf^{-n}(\ux)\in B'$.
	For each such~$n$, the arc~$\gamma$ of $\Cl(B)$ to which $\hf^{-n}(\ux)$
	belongs satisfies the conditions of Lemma~\ref{glr}~(a). Therefore
	\mbox{$Z\subseteq\GR$}.
\qed

\smallskip
\noindent\textbf{Acknowledgments:} we would like to thank
Ana Anu\v{s}i\'{c} and Jernej \v{C}in\v{c} for useful conversations, and the
anonymous referee for several helpful comments.

AdC was partially supported by CAPES grant 88881.119100/2016-01.

\bibliographystyle{amsplain}
\bibliography{typ_f}

\end{document}